\definecolor{myblue}{rgb}{0.09,0.32,0.44} 
\numberwithin{equation}{section}
\newtheorem{maintheorem}{Theorem}
\newtheorem{theorem}{Theorem}[section]
\newtheorem{lemma}[theorem]{Lemma}
\newtheorem{proposition}[theorem]{Proposition}
\newtheorem{definition}[theorem]{Definition}
\newtheorem{question}[theorem]{Question}
\theoremstyle{remark}
\newtheorem{remark}[theorem]{Remark}
\newtheorem*{remarks*}{Remarks}
\newcommand{\R}{\mathbb{R}}
\newcommand{\N}{\mathbb{N}}
\newcommand{\cF}{{\ensuremath{\mathcal F}} }
\newcommand{\cH}{{\ensuremath{\mathcal H}} }
\newcommand{\cT}{{\ensuremath{\mathcal T}} }
\newcommand{\cJ}{{\ensuremath{\mathcal J}} }
\newcommand{\cD}{{\ensuremath{\mathcal D}} }
\newcommand{\cU}{{\ensuremath{\mathcal U}} }
\DeclareMathSymbol{\leqslant}{\mathalpha}{AMSa}{"36} 
\DeclareMathSymbol{\geqslant}{\mathalpha}{AMSa}{"3E} 
\DeclareMathSymbol{\eset}{\mathalpha}{AMSb}{"3F}     
\newcommand{\bbN}{{\ensuremath{\mathbb N}} }
\newcommand{\gl}{\lambda}
\renewcommand{\epsilon}{\varepsilon}
\def\captionfont@{\footnotesize}
\def\captionheadfont@{\scshape}
\long\def\@makecaption#1#2{%
  \vspace{2mm}
  \setbox\@tempboxa\vbox{\color@setgroup
    \advance\hsize-6pc\noindent
    \captionfont@\captionheadfont@#1\@xp\@ifnotempty\@xp
        {\@cdr#2\@nil}{.\captionfont@\upshape\enspace#2}%
    \unskip\kern-6pc\par
    \global\setbox\@ne\lastbox\color@endgroup}%
  \ifhbox\@ne 
    \setbox\@ne\hbox{\unhbox\@ne\unskip\unskip\unpenalty\unkern}%
  \fi
  \ifdim\wd\@tempboxa=\z@ 
    \setbox\@ne\hbox to\columnwidth{\hss\kern-6pc\box\@ne\hss}%
  \else 
    \setbox\@ne\vbox{\unvbox\@tempboxa\parskip\z@skip
        \noindent\unhbox\@ne\advance\hsize-6pc\par}%
\fi
  \ifnum\@tempcnta<64 
    \addvspace\abovecaptionskip
    \moveright 3pc\box\@ne
  \else 
    \moveright 3pc\box\@ne
    \nobreak
    \vskip\belowcaptionskip
  \fi
\relax
}
\def\writefig#1 #2 #3 {\rlap{\kern #1 truecm
\raise #2 truecm \hbox{#3}}}
\renewcommand{\Pr}{ \mathbb P}
\newcommand{\ex}{\mathbb E}
\newcommand{ \rel}{ t_{\mathrm{rel}} }
\newcommand{ \mix}{ t_{\mathrm{mix}} }
\newcommand{ \TV}{ \mathrm{TV} }
\newcommand{ \cM}{ \mathcal M }
\newcommand{ \cL}{ \mathcal L }
\newcommand{\eps}{\epsilon }
\newcommand{\tb}[1]{\textbf{\color{blue}{#1}}}
\newcommand{\sym}{\mathfrak{S}}
\DeclareMathOperator{\Cay}{Cay}
\def\moverlay{\mathpalette\mov@rlay}
\def\mov@rlay#1#2{\leavevmode\vtop{%
   \baselineskip\z@skip \lineskiplimit-\maxdimen
   \ialign{\hfil$\m@th#1##$\hfil\cr#2\crcr}}}
\newcommand{\charfusion}[3][\mathord]{
    #1{\ifx#1\mathop\vphantom{#2}\fi
        \mathpalette\mov@rlay{#2\cr#3}
      }
    \ifx#1\mathop\expandafter\displaylimits\fi}
\newcommand{\sdfrac}[2]{\mbox{\small$\displaystyle\frac{#1}{#2}$}}
\begin{document}

\title{Sensitivity of mixing times of Cayley graphs}
\author{Jonathan Hermon \and Gady Kozma}
\thanks{
University of British Columbia, Vancouver, Canada. E-mail: {\tt jhermon@math.ubc.ca}.
Supported by NSERC grant}
\thanks{
Weizmann Institute of Science, Rehovot, Israel. E-mail: {\tt Gady.Kozma@weizmann.ac.il}.}

\date{}

\begin{abstract}
  We show that the total variation mixing time is not quasi-isometry invariant, even for Cayley graphs. Namely, we construct a sequence of pairs of Cayley graphs with maps between them that twist the metric in a bounded way, while the ratio of the two mixing times goes to infinity. The Cayley graphs serving as an example have unbounded degrees.

  For non-transitive graphs we construct \emph{bounded degree} graphs for which the mixing time from the worst starting point for one graph is asymptotically smaller than the mixing time from the best starting point of the random walk on a network obtained by increasing some of the edge weights from 1 to $1+o(1)$.

\end{abstract}

\keywords{Sensitivity; mixing time; sensitivity of mixing times; Cayley graphs; Interchange process.}

\subjclass[2020]{60J10}

\maketitle

\section{Introduction}

There are numerous works aiming at sharp geometric bounds on the mixing time of a finite Markov chain. Examples include Morris and Peres' evolving sets bound \cite{cf:Evolving}, expressed in terms of the expansion profile, and the related bound by Fountoulakis and Reed \cite{FR}. The sharpest geometric bounds on the uniform  (a.k.a.\ $L_{\infty}$) mixing time are given in terms of the Log-Sobolev constant (see \cite{diaconis} for a survey on the topic) and the spectral profile bound, due to Goel et al.~\cite{cf:Spectral}. Both determine the uniform  mixing time up to a multiplicative factor of order $\log \log [ 1/\min \pi (x)] $, where throughout $\pi$ denotes the stationary distribution (see \cite{diaconis,cf:Kozma}). The reader not familiar with  mixing time definitions can find them in \S\ref{s:mixingpre}. Other notions and definitions used below can be found in \S\S\ref{s:qirob} and \ref{s:notation}.

This type of geometric bounds on mixing times are robust under bounded perturbations of the edge weights, and in the bounded degree setup, also under quasi-isometries. 
That is, changing some of the edge weights by at most some multiplicative constant factor can change these geometric bounds only by some corresponding constant factor. 
A natural question, with obvious implications to the potential sharpness of such geometric bounds, is whether mixing times are themselves robust under small changes to the geometry of the Markov chain. For instance, can bounded perturbations of the edge weights change the mixing time by more than a constant factor? Similarly,  how far apart can the mixing times of  simple random walks (SRW) on two quasi-isometric graphs of bounded degree be?
Different variants of this question were asked by various authors such as Pittet and Saloff-Coste \cite[\S6]{cf:Pittet}, Diaconis and Saloff-Coste \cite[p.\ 720]{diaconis} and Aldous and Fill \cite[Open Problem 8.23]{cf:Aldous}.

Ding and Peres \cite{cf:Ding} constructed a sequence of bounded degree graphs satisfying that the order of the total variation mixing times strictly increases as a result of a certain sequence of bounded perturbations of the edge weights.\footnote{Their construction was refined by J.H.\ and Peres in \cite[Theorem 3]{HP}, so that the mixing time changes by an order of $\log |V|$, which is optimal. The same paper contains various additional results concerning sensitivity of mixing times and of the cutoff phenomenon under small changes to the geometry of the chain.} In \cite{unifsensitivity} a similar example is constructed in which the uniform mixing time is sensitive under bounded perturbations of the edge weights, as well as under a quasi-isometry. All these examples are based on the `perturbed tree' example of T.\ Lyons \cite{L87} (simplified by Benjamini \cite{cf:Benjamini}). In particular they are highly non-transitive, and a priori it appears as if what makes such examples work could not be imitated by a transitive example. 
It remained an open problem to determine whether the total variation mixing time of random walk on vertex-transitive graphs is robust under small perturbations. This was asked by Ding and Peres \cite[Question 1.4]{cf:Ding} (see also \cite[p.\ 3]{cf:Kozma} and  \cite[\S6]{cf:Pittet}). In this paper we give a negative answer to this question, even when the small perturbation  preserves transitivity. 

We denote the group of permutations of $n$ elements by $\sym_n$. Recall that a transposition is an element of $\sym_n$ which exchanges two values and keeps all the rest fixed.
\begin{maintheorem}
\label{thm:short}
There exist a pair of sequences of sets of transpositions $S_n$ and $S_n'$ such that the Cayley graphs $\Cay(\sym_{n},S_n)$ and $\Cay(\sym_{n},S_n')$ are $(3,0)$-quasi-isometric and
\[
\mix(\Cay(\sym_{n},S_n')) \gtrsim   \mix(\Cay(\sym_{n},S_{n})) \log \log \log |\sym_{n}|.
\]
Further, $S_n \subset S_n' \subset S_n^3:=\{xyz:x,y,z \in S_n \}$.
\end{maintheorem}
Of course, $\log\log\log|\sym_{n}|\asymp\log\log n$. We formulated the theorem in this way because the size of the group is the more natural object in this context. Let us remark that probably the ratio of mixing time in our example is indeed $\asymp \log\log\log |\sym_{n}|$, but for brevity we prove only the lower bound.

The mixing times in Theorem \ref{thm:short} are the total variation ones. In what comes, whenever we write mixing time without mentioning the metric, it is always the total variation mixing time.
The behaviour described in Theorem \ref{thm:short} cannot occur for the uniform mixing times which  in the transitive setup is quasi-isometry invariant (see Theorem \ref{thm:L2mix} below).


\subsection{Variations on a theme}
\label{s:private}

A related question, asked by Itai Benjamini (private communication) is whether there exists some absolute constant $C>0$ such that for every finite group $G$ for all two symmetric sets of  generators $S$ and $S'$ such that $S \subset S'$ we have that the  mixing time of SRW on the  Cayley graph of $G$ with respect to $S'$ is at most  $C \frac{|S'|}{|S|} $ times the mixing time of SRW on the Cayley graph of $G$ with respect to $S$ (a set $S$ is called \emph{symmetric} if $S=S^{-1}\coloneqq \{s^{-1}:s \in S \}$). Our example also disproves this. 
In fact, $S \subset S' \subseteq S^3$ and  $|S'|-|S| \le \sqrt{ |S|}$,  where  $S^{i}\coloneqq \{s_1\cdots s_i:s_1,\ldots,s_i \in S \}$ for $i \in \N$. The definition of an $(a,b)$-quasi-isometry (see \S\ref{s:qirob}) gives that if $S \subseteq S' \subseteq S^i$ then $\Cay(G,S)$ and $\Cay(G,S')$ are $(i,0)$-quasi-isometric. 

The reason that $|S'|-|S|\le\sqrt{|S|}$ is explained in the proof sketch section below --- both share a complete graph on some set $K$ with $|K|\asymp n$. Hence this complete graph has an order of $\asymp n^2$ edges, and there are only $o(n)$ additional edges. We could have increased $S_n$ by including in it all $|K|!$ permutation of the elements in $K$, while keeping $S_n' \setminus S_n$ the same set (of size $o(n)$), thus making $\frac{ |S_{n}'|-|S_n|}{|S_n|} $ tremendously smaller. 


We will also be interested in weighted versions of the problem, as these allow us to define `weak' perturbations in a natural way. 
Let $\Gamma$ be a group 
and let $W\coloneqq (w(s):s \in S)$ be symmetric weights (i.e.\ $w(s)=w(s^{-1})$) such that the support of $W$ generates $\Gamma$. The discrete-time lazy random walk on $\Gamma$ with respect to $W$ is the process with transition probabilities $P(g,g)=1/2$ and $P(g,gs)=\frac{w(s)}{2\sum_{r \in S}w(r)}$ for all $g,s \in \Gamma$. We denote its TV (total variation) mixing time by  $\mix(\Cay(\Gamma,W))$. In continuous time, let $R\coloneqq (r(s):s \in S)$ be symmetric rates. The continuous-time random walk on $\Gamma$ with respect to $R$ is the process that has infinitesimal transitions rates $r(s)$ between $g$ and $gs$ for all $g,s \in \Gamma$. Denote its mixing time by  $\mix(\Cay(\Gamma,R))$. 
As in the unweighted case, due to the group symmetry the invariant distribution is uniform and the TV distance between it and the distribution of the walk at some given time is independent of the initial state.

Recall that $\sym_n$ is the symmetric group (the group of permutations of $n$ elements). 
The following is the promised weighted version of our main result.
\begin{maintheorem}
\label{thm:weighted}
For every $f: \mathbb{N} \to [1,\infty)$ satisfying that $1 \ll f(n) \le \log \log \log n$, there exist   a sequence $(S_n)_{n =3}^{\infty}$ of sets of transpositions $S_n\subset\sym_{n}$  and a sequence of weights $(W_n)_{n=3}^{\infty}$,  such that
$W_n=(w_n(s))$ is supported on $S_n$ and satisfies that $1 \le w_{n}(s) \le 1+\left(f(n!)/\log\log n\right)^{1/4}$ for all $s \in S_n$, and such that
\begin{equation}
\label{eq: 2}
\mix(\Cay(\mathfrak{S}_{n},W_n)) \gtrsim \mix(\Cay(\mathfrak{S}_{n},S_{n}))f(n!).
\end{equation}
Similarly, in continuous time if we set $R_{n}=W_n$ (for the above $W_n$)  we get that
\begin{equation}
\label{eq: 2'}
\mix(\Cay(\mathfrak{S}_{n},R_n)) \gtrsim \mix(\Cay(\mathfrak{S}_{n},S_{n}))f(n!).
\end{equation} 
\end{maintheorem}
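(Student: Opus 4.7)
My strategy is to adapt the construction of Section \ref{s:sketch}, replacing the bridge addition $G \to G'$ by a slight weight perturbation that produces a directional bias at each bifurcation of the gadget. The generator set $S_n$ consists of the transpositions corresponding to the edges of a gadget together with a complete graph $K$ on a vertex subset of size $n-o(n)$, \emph{without} the bridges of $G'$. The weights $W_n$ assign $w_n(s) = 1+\delta$, with $\delta \coloneqq (f(n!)/\log\log n)^{1/4}$, to every transposition corresponding to an edge on a ``left path'' of the gadget (the stretched edge between a bifurcation and its designated left child), and $w_n(s) = 1$ elsewhere. Unlike in Section \ref{s:sketch}, I take $u = f(n!)$ stages $H_1,\ldots,H_u$, each consisting of stretched binary trees of constant depth $s$ and constant stretching $\ell$, with $s,\ell$ chosen below.

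The key computation is the per-bifurcation bias. By a voltage argument using the recursive subtree symmetry of the weighted tree (within a subtree all leaves carry the same voltage, hence the whole subtree is at that voltage and can be treated as a single super-node), the probability that the walker reaches $y_L$ before $y_R$ from their common parent bifurcation equals $(1+\delta)/(2+\delta) = 1/2 + \delta/4 + O(\delta^2)$. Iterating this at each level shows that $\Pr(\text{first leaf}=v)$ factorizes as $\prod_{k=1}^{s} p_k(v)$ with $p_k(v) \in \{(1+\delta)/(2+\delta),\ 1/(2+\delta)\}$; hence the number $g(x)$ of left turns from the root of $H_i$ to a random first-hit leaf $x$ is distributed as $\mathrm{Binomial}(s,p)$ with $p=1/2$ in the unweighted chain and $p=1/2 + \delta/4 + O(\delta^2)$ in the weighted chain.

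I set $s \coloneqq C\log u / \delta^2$ for $C$ a large constant, pick $\ell$ so that the single-stage exit time $s\ell^2$ exceeds $\log n$ (so that mixing of the interchange process on $K$ is dominated by a stage), and take the threshold $\epsilon$ in \eqref{eq:defBi} equal to $\delta/8$. A routine check using $\delta^4 \log\log n = f(n!)$ and $f(n!) \le \log\log\log(n!) \asymp \log\log n$ shows these choices are compatible with the gadget-size bound $\sum_i |B_{i-1}|\cdot 2^s \cdot \ell \le n/2$. Hoeffding's inequality yields $\Pr(g(x) > (1/2+\epsilon)s) \le e^{-2\epsilon^2 s} = o(1/u)$ in the unweighted chain, so $B_i$ is atypical; the coupling argument of Section \ref{s:coupling} then gives $\mix(\Cay(\sym_n,S_n)) \lesssim s\ell^2$. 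The shifted bound $\Pr(g(x) \le (1/2+\epsilon)s) \le e^{-2(\delta/4-\epsilon)^2 s} = o(1/u)$ makes $B_i$ typical in the weighted chain, so with probability $1-o(1)$ the walker traverses all $u$ stages before ever reaching $K$; a distinguishing statistic such as ``has particle $1$ visited $K$?'' then yields $\mix(\Cay(\sym_n,S_n,W_n)) \gtrsim u \cdot s\ell^2$. Dividing gives the claimed ratio $u=f(n!)$, and the continuous-time bound \eqref{eq: 2'} follows from the standard correspondence between continuous-time and lazy discrete-time random walks.

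The main obstacle is verifying parameter compatibility: the concentration constraint $s \gtrsim \log u/\delta^2$ and the gadget-size constraint $us \lesssim \log n$ (up to logarithmic factors coming from the $|B_i|$-recursion) must both be satisfiable for every $1 \ll f(n) \le \log\log\log n$. Substituting $\delta^{-2} = \sqrt{\log\log n / u}$, this reduces to $\sqrt{u \log\log n} \cdot \log u \ll \log n$, which holds comfortably throughout the allowed range since $u \le \log\log n$. A secondary check is that the coupling of Section \ref{s:coupling} remains effective with the $(1+\delta)$-perturbed weights; this follows from standard Dirichlet form comparison since $\delta = o(1)$.
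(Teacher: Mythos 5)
Your proposal changes the construction in a way that breaks the upper bound $\mix(\Cay(\sym_n,S_n))\lesssim s\ell^2$, which is where the entire gain is supposed to come from.

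In your variant all stages have the same depth $s$ and stretch $\ell$. Consequently $|H_i|$ grows geometrically in $i$: $|B_i| \approx |B_{i-1}|\cdot 2^{s}e^{-c\epsilon^2 s}$, and for $\epsilon$ small the factor $2^s$ dominates, so $|H_i|\approx 2^{is}$ and $|H_u|\approx 2^{us}$. The coupling of \S\ref{s:coupling} requires \emph{all} $\Theta(2^{us})$ particles sitting in the gadget at time $0$ to hit $K$ by the claimed time. The escape-time tail from a single stretched tree of depth $s$ and stretch $\ell$, summed over starting points in that tree, is controlled via \eqref{eq: exitprob} by $|H_i|e^{-c t/\ell^2}$, since the relevant restricted spectral gap is $\asymp 1/\ell^2$ (Proposition \ref{p: LS}). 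With $t = C s\ell^2$ this gives $|H_u|e^{-cCs}\approx 2^{us}e^{-cCs}$, which is enormous as soon as $u > cC/\ln 2$ — and you want $u = f(n!)\to\infty$ while $C$ is a constant. So the union bound over particles fails, and the best you can extract this way is $\mix(\Cay(\sym_n,S_n))\lesssim u\,s\ell^2$, which is of the same order as your lower bound $\mix(\Cay(\sym_n,S_n,W_n))\gtrsim u\,s\ell^2$. No separation.

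This is precisely what the paper's choice $s_i = 4^{i-1}u$, $\ell_i = 2^{u+1-i}$ is engineered to avoid: the per-stage traversal time $s_i\ell_i^2 = u4^u$ is kept constant in $i$, but the \emph{depth} grows geometrically, so $|H_i|\approx 2^{s_i}$ and the union bound term $|H_i|e^{-ct/\ell_i^2} = 2^{s_i}e^{-c' t\, s_i/(u4^u)}$ is small \emph{uniformly in $i$} once $t \gtrsim u4^u/\eps^4$. Equivalently, the eigenvector of the restricted generator is localised on the first few stages (see the discussion at the start of \S\ref{s:proofthm3}); your construction spreads it across the whole gadget. In the paper the number of stages is fixed at $u\asymp\log\log n$ and the gain $f(n!)$ is obtained through $u\eps^4$ via the choice $\eps\asymp(f(n!)/\log\log n)^{1/4}$, not by taking $u = f(n!)$. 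Your voltage computation at a single bifurcation, the atypicality shift $1/2\to 1/2+\Theta(\delta)$, and the lower-bound traversal argument are all correct; the gap is solely in the upper bound, and it is fatal to the stated ratio.
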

We remark that the power $1/4$ is not optimal (it was not a priority for us to optimise it). As before, $|S_n|\asymp n^2$. 
\subsection{A non-transitive example}
Our third result shows that if one is willing to consider non-transitive instances, then indeed one can have a bounded degree example whose (usual worst-case) mixing time is of strictly smaller order than the mixing time starting from the best  initial state (i.e.\ the one from which the walk mixes fastest) after a small perturbation. In all previous constructions of graphs with a sensitive mixing time there was a large set that starting from it the walk mixes rapidly both before and after the perturbation, and the mixing time is governed by the hitting time of this set (which is sensitive by construction). In particular, the mixing time started from the best initial state is not sensitive.    

Let $G$ be a connected graph.  Let $W=(w(e):e \in E(G))$ be  positive edge weights. Consider the \emph{lazy random walk} $(X_k)_{k=0}^\infty$ on $G$ i.e.\ the process with transition probabilities $P(x,y)=\frac{w(xy)}{2\sum_{z}w(xz)} $ and $P(x,x)=\frac{1}{2}$ for all neighbouring $x,y \in G$. 
For $x \in G$ we define the mixing time starting from $x$ by
\[
\mix(G,W,x)\coloneqq \min \{k: \|\Pr_x(X_k = \cdot)-\pi \|_{\TV}\le 1/4 \}.
\]
With this definition the usual mixing time $\mix(G,W)$ (see \S\ref{s:mixingpre} below) is equal to $\max_x\mix(G,W,x)$.
\begin{maintheorem}
\label{thm: 2}
There exist a  sequence of finite graphs $L_n=(V_n,E_n)$ of diverging sizes and uniformly bounded degree (i.e.\ $\sup_n \max_{v \in V_n}\deg v<\infty$) and a sequence of   some symmetric edge weights $W_n=(w_n(e):e \in E_{n})$ such that $1 \le w_{n}(e) \le 1+\delta_n$ for all $e \in E_{n}$ and such that
\begin{equation}
\label{eq: 3}
\max_{x\in V_n}\mix(L_{n},1,x) \le \delta_n \min_{x \in V_n} \mix(L_{n},W_n,x).
\end{equation}
for some $\delta_n\to 0$.
\end{maintheorem}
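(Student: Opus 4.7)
The plan is to adapt the stretched perturbed-tree gadget from the proof of Theorem~\ref{thm:short} to the bounded-degree setting, and to attach many copies of it to an expander backbone so that the perturbation affects the mixing time of the walk from \emph{every} starting vertex. Specifically, I would take $G_n$ to be the union of a $3$-regular expander $E_n$ on $k_n$ vertices and, attached to each vertex of $E_n$, a ``trap'' gadget which is a bounded-degree variant of the gadget described in Section~\ref{s:sketch}: a binary tree of depth $d_n$ with each edge replaced by a short path of length $\ell_n$, attached to $E_n$ through the root. The weights $W_n$ equal $1$ on $E_n$ and on the right-turn stretched paths of each trap, and $1+\delta_n$ on the left-turn stretched paths, so that the weighted walk inside each trap is biased toward the ``bad'' leaves $B_i$ of~\eqref{eq:defBi}. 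Since each vertex of $G_n$ has degree at most $4$, the resulting family has uniformly bounded degree.

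For step 1, I would show $\mix(G_n)\lesssim T_n$ for some $T_n$ controlled by the slowest of the expander mixing time $O(\log k_n)$ and the unperturbed SRW mixing time within a single trap (polynomial in the trap's diameter $d_n\ell_n$), using standard comparison or spectral-profile arguments. Since all edges have weight $1$, the stationary distribution is proportional to degree and assigns comparable mass to every trap, and the walk mixes globally on roughly the time scale of the single worst local mixing.

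For step 2, I would show $\min_x\mix(G_n,W_n,x)\gtrsim T_n/\delta_n$. With parameters chosen so that $\delta_n d_n\ell_n\gg\log k_n$, the accumulated bias concentrates $\Omega(1/k_n)$ of the perturbed stationary mass $\pi'$ in the bad-leaf region of each trap, exactly as in the lower bound of Theorem~\ref{thm:short}. Mixing to $\mathrm{TV}$ distance $1/4$ therefore requires the walk to deposit approximately the correct mass in each of the $k_n$ bad-leaf regions. From any starting vertex $x$, only one trap can be fed directly; reaching the attachment vertex of any other trap takes $\gtrsim k_n$ steps through $E_n$, and once there the walk still needs $\asymp d_n\ell_n^2/\delta_n$ time to deposit mass in its bad-leaf region against the random fluctuations. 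Summing these contributions over the $k_n$ traps, and choosing $\ell_n,d_n,k_n$ so that this total dominates $T_n/\delta_n$, yields the desired lower bound.

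The main obstacle is step 2, and more specifically, obtaining a mixing-time lower bound that is uniform over starting vertices. Standard bottleneck- or Wilson-type arguments give a lower bound that only applies from the worst starting point, and this is precisely the source of the phenomenon discussed in the paragraph preceding the theorem: in all previous constructions some $x$ enjoys a small $\mix(G_n,W_n,x)$. The key additional point that makes the uniform bound possible here is that $\pi'$ is spread out across $k_n$ disjoint trap regions, each of which can only be reached through a single attachment vertex of the expander. A coupling/distinguishing statistic argument then shows that for any $x$ and any time $t=o(T_n/\delta_n)$, at least a constant fraction of the $k_n$ traps have not yet been adequately visited, and each such trap contributes $\Omega(1/k_n)$ to $\|P_x^t-\pi'\|_{\mathrm{TV}}$, summing to a positive constant.
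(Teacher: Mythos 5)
Your expander-with-traps construction has a fundamental problem with Step 2, and it is precisely the problem the paper flags in the paragraph immediately preceding Theorem~\ref{thm: 2}: in such a construction there is always a large set of starting states (here, the expander vertices) from which the walk mixes rapidly both before \emph{and} after the perturbation. Starting from a vertex $x\in E_n$, the walk mixes on the expander in $O(\log k_n)$ steps, and then it fills all $k_n$ traps \emph{in parallel}, not sequentially, so the running-time accounting in your last paragraph (``summing these contributions over the $k_n$ traps'') does not give a valid lower bound. Moreover, the bias toward the left-heavy leaves does not create a bottleneck for \emph{filling} a trap; a constant-factor edge-weight perturbation changes $\pi'$ by at most a $(1+\delta_n)^{O(1)}$ factor pointwise, so the ``bad-leaf'' regions do not carry $\Omega(1/k_n)$ mass, and the mixing time of the biased walk inside a trap (started from the root) is comparable to that of the unbiased walk. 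The perturbation only makes \emph{escaping} the trap slow, which is relevant from the worst starting state but not from the best one. Consequently $\min_x\mix(G_n,W_n,x)$ in your construction stays at the unperturbed order, and the claimed lower bound fails.

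The paper's route is genuinely different and designed exactly to avoid this obstacle. It first observes that Theorem~\ref{thm:short} already gives a \emph{transitive} (hence uniform-in-starting-point) sensitive example, only with unbounded degree. It then makes this bounded degree via a product-type construction: the state space is $H\times\sym_n$ where $H$ is the bounded-degree auxiliary ``clock'' graph of Lemma~\ref{lem:auxexpander}, and the $\sym_n$-coordinate takes one slow-motion interchange-process step each time the $H$-walk dawdles at a designated vertex in $A\subset B$. From \emph{any} start, mixing requires the $\sym_n$-coordinate to mix, which requires $\approx r$ visits to $A$ by the $H$-walk, each costing $\approx |H|/2^m$ time; since the perturbation increases $r$ by the desired factor, it increases the mixing time from every starting state. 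If you want to pursue a bounded-degree proof, some device of this kind — tying the slow coordinate to a transitive structure so that it must be mixed from every start — seems essential; a trap attached to an expander cannot do this, because the trap's bias is invisible to a walker that enters the trap ``from above.''
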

It follows from Theorem \ref{thm: 2} that the average TV mixing time, by which we mean $\inf \{t:\sum_{x}\pi(x)\|\Pr_x(X_t = \cdot)-\pi \|_{\TV} \le 1/4 \}$, can be sensitive to perturbations. This is in contrast with the average $L_2$ mixing time (see \S\ref{s:mixingpre}). This gives a negative answer to a question of L.\ Addario-Berry (private communication).

As in Theorem \ref{thm:short}, the change in the order of the mixing time in Theorem \ref{thm: 2} (the inverse of the $\delta_n$ in \eqref{eq: 3}) is $o(\log \log \log |V_{n} |)$. If we replace the condition $w_n\le 1+\delta_n$ with $w_n\le 1+c$ then the change in the order of the mixing time can be as large as  $\log \log \log |V_{n} |$.

Let us quickly sketch the construction of Theorem \ref{thm: 2} (full details are in \S\ref{s:proofthm2}). Let $n$ be some number and let $S_n$ be the set of transpositions from Theorem \ref{thm:weighted}. Let $H$ be a large, fast mixing graph, and let $A$ be some subset of the vertices of $H$ with $|A|=|S_n|$ and with the vertices of $A$ far apart from one another. The graph $L$ of Theorem 3 has as its vertex set $\mathfrak{S}_n\times H$ (we are using here the same notation for the graph and its set of vertices). We choose the edges of $L$ such that random walk on $L$ has the following behaviour. Its $H$ projection is just simple random walk on the graph $H$. Its $\mathfrak{S}_n$ projection is also simple random walk on $\Cay(\mathfrak{S}_n,S_n)$, but slowed down significantly. Any given transposition $s\in S_n$ can be applied only when a corresponding vertex of $A$ is reached in the second coordinate. The perturbation goes by perturbing only the $\mathfrak{S}_n$ projection. We defer all other details to \S\ref{s:proofthm2}.

\subsection{A proof sketch}\label{s:sketch}
We will now sketch the proof of our main result, Theorem \ref{thm:short} (the proof of Theorem \ref{thm:weighted} is very similar). Readers who intend to read the full proof can safely skip this section.

Random walk on $\Cay(\sym_{n},S_n)$ with $S_n$ composed of transposition is identical to the \emph{interchange process} on the graph $G$ which has $n$ vertices and $\{x,y\}$ is an edge of $G$ if and only if the transposition $(x,y)\in S_n$. Hence we need to construct two graphs $G$ and $G'$ on $n$ vertices, estimate the mixing time of the two interchange processes and show that the corresponding Cayley graphs are quasi-isometric.

Our two graphs have the form of `gadget plus complete graph'. Namely, there is a relatively small part of the graph $D$ which we nickname `the gadget' and all vertices in $G\setminus D$ are connected between them. While $D$ and the corresponding $D'$ in $G'$ will be small (we will have $|D|=|D'|$), they dominate the mixing time of the interchange process.

To describe the gadget, let $u\in\bbN$ and $\eps\in(0,\frac12)$ be some parameters. The gadget will have $u$ `stages' $H_1,\dotsc,H_u$ (the gadget is almost $\cup_{i=1}^u H_i$ but not quite). We obtain each $H_i$ by `stretching' the edges of some graph $H_i'$ which is a union of binary trees of depth $s_i\coloneqq  4^{i-1}u$ (note that $H_i'$ has the depth exponential in $i$ and hence has volume doubly exponential in $i$). To get $H_i$, replace each edge of $H_i'$ with a path of length $\ell_i\coloneqq  2^{u+1-i}$. Namely for each edge $\{x,y\}$ of $H_i'$ we add $\ell_i-1$ new vertices (denote them by $v_1,\dotsc,v_{\ell_i-1}$, and denote also $v_0=x$ and $v_{\ell_i}=y$) and connect $v_j$ to $v_{j+1}$ for all $j\in\{0,\dotsc,\ell_i-1\}$; and remove the edge $\{x,y\}$.

We still need to explain how many trees are in each $H_i$ and how they are connected to one another and to the rest of the graph. For this we need the parameter $\eps$, which at this point can be thought of as a sufficiently small constant. For each of the vertices in each of the trees (before stretching) we label the children arbitrarily `left' and `right'. For each leaf $x\in H_i$ we define $g(x)$ to be the number of left turns in the (unique) path from the root to $x$. We now let
\begin{equation}\label{eq:defBi sketch}
B_i\coloneqq \Big\{x\textrm{ leaf of }H_i:g(x)>\left(\sdfrac12+\eps\right)s_i\Big\}.
\end{equation}
The sets $B_i$ are used twice. First we use them to decide how many trees will be in each $H_i$. For $i=1$ we let $H_1$ be one tree. For every $i>1$, we let $H_i$ have $|B_{i-1}|$ trees, and identify each point of $B_{i-1}$ with one of the roots of one of the trees in $H_i$. Second, we use the $B_i$ to connect the $H_i$ to the complete graph. Every leaf of $H_i$ which is not in $B_i$ is identified with a vertex of the complete graph (the complete graph $K$ will be of size $n-o(n)$, much larger than $\cup_{i=1}^u H_i$ which will be of size $O(n^{1/4})$, and so  most of the vertices of $K$  are not identified with a vertex of the gadget). This terminates the construction of $G$.  See Figure \ref{fig:gadget}. Experts will clearly notice that this is a variation on the perturbed tree idea. In other words, while the perturbed tree itself (as noted above) is highly non-transitive, one can use it as a basis for transitive example by examining the interchange process on it.

\begin{figure}
\centering{\input{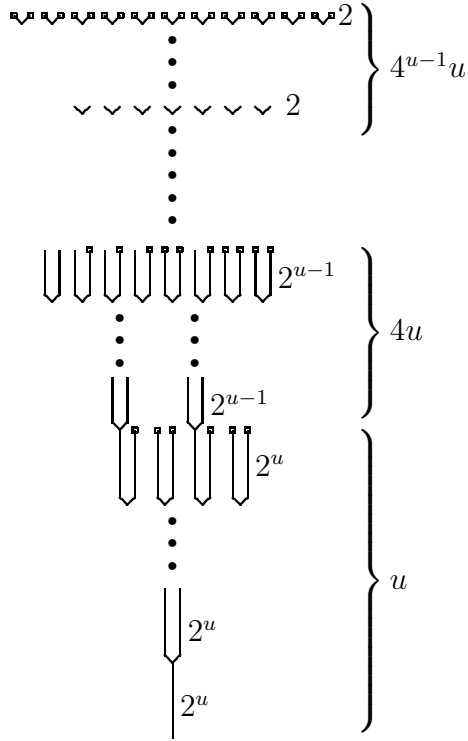}}
\caption{The gadget. Vertices marked with small squares actually belong to the complete graph rather than to the gadget.}\label{fig:gadget}
\end{figure}

The graph $G'$ is almost identical, the only difference is that in each path corresponding to a left turn we add short bridges. Namely, examine one such path and denote its vertices $v_0,\dotsc,v_{\ell_i}$ as above. Then in $G'$ we add edges between $v_{2j}$ and $v_{2j+2}$ for all $j\in\{0,\dotsc,\ell_i/2-1\}$. 

Why this choice of parameters? It is motivated by a heuristic that for such graphs, namely a gadget connected to a large complete graph, the mixing time of the interchange process is the time \emph{all} particles have left the gadget (they do not have to all be outside the gadget at the same time, it is enough that each particle left the gadget at least once by this time). See \S\ref{s:Oliveira} below for some context for this heuristic. Thus we are constructing our $H_i$ such that the time that it takes all particles to leave $H_i$ is approximately independent of $i$. Indeed, the time a particle takes to traverse a single stretched edge is approximately $\ell_i^2 \asymp4^{u-i}$ while each tree of $H_i$ has depth $4^{i-1}u$ (in the sense that this is the depth of the tree before its edges have been stretched) and the particle has to traverse all levels of $H_i$, so it exits $H_i$ after time approximately $4^{u-1}u$, which is independent of $i$. And this holds for all particles simultaneously because the probability that a particle takes $\lambda\cdot 4^{u-1}u$ time to traverse the tree (for some $\lambda>1$) is exponentially small in the number of layers $4^{i-1}u$, and hence that would not happen to any of the particles in the tree, which has approximately $2^{4^{i-1}u}$ particles, if $\lambda$ is sufficiently large. In the roughest possible terms, the growing height of the trees is dictated by the growing number of vertices (which must grow because $H_i$ has many more roots than $H_{i-1}$, since each $x\in B_{i-1}$ is a root of $H_i$) while the decreasing stretching balances the growing height to get approximately uniform expected exit time. The only exception is $H_1$, whose height is not dictated by the number of roots (clearly, as there is only one), but by the stretching.

With the definitions of $G$ and $G'$ done, estimating the mixing times is relatively routine, so we make only two remarks in this quick sketch. How do we translate the fact that all particles visited the complete graph into an upper bound on the mixing time? We use a \emph{coupling} argument. We couple two instances $\sigma$ and $\sigma'$ of the interchange process (in continuous time) using the same clocks and letting them walk identically unless $\sigma(x)=\sigma'(y)$ for an edge $\{x,y\}$ that is about to ring, in which case we apply the transposition to exactly one of $\sigma$ or $\sigma'$, reducing the number of disagreements (this coupling involves a standard trick of doubling the rates, and censoring each step with probability 1/2). The fact that the complete graph is much larger and has many more edges simplifies our analysis (the reader can find the details of the coupling in \S \ref{s:coupling}).

The lower bound for the mixing time on $G'$ uses the standard observation that adding those edges between $v_{2j}$ and $v_{2j+2}$ makes the left turn more likely to be taken than the right turns, transforming $B_i$ from an atypical set (with respect to the hitting distribution of the leaf set of $H_i$) to a typical one, and hence the particle which started at the root of $H_1$ has high probability to traverse \emph{all} $H_i$ before entering the complete graph for the first time. This, of course, takes it $4^{u-1}u^2$ time units (compare to the mixing time bound of $4^{u-1}u$ for the interchange process on $G$). Of course, the mixing time of the interchange process on $G'$ is also bounded by the time all particles leave the gadget, but we found no way to use this. We simply bound the time a single particle leaves the gadget and get our estimate.

\subsection{The mixing time of the interchange process}\label{s:Oliveira}
Since our proof revolves around estimating the mixing time of the interchange process on some graph, let us spend some time on a general discussion of this topic. 
We first mention some conjectures relating the mixing time of the interchange process on a finite graph $G$ to that of $|G|$ independent random walks on $G$. 

Given a finite graph $G=(V,E)$ and edge rates $R$ the corresponding $n$-fold product chain is the continuous-time Markov chain on $V^n$ satisfying that each coordinate evolves independently as a random walk on $G$ with edge rates $R$. This is a continuous-time  walk on the $n$-fold Cartesian product of $G$ with itself, whose symmetric edge rates $R_n$ are given by
  \[
  R_n((v_1,\dotsc,v_n),(v_1,\dotsc,v_{k-1},v_k',v_{k+1},\dotsc,v_n)):=R(v_k,v_k')
  \]
for all $v_1,\ldots,v_n,v_k' \in V$ and $k \in [n]$.
We shall refer to this Markov chain as $n$ independent random walks on $G$ with edge rates $R$ and denote its (TV) mixing time by $ \mix(n \text{ independent RWs on } G,R)$. As usual, the mixing time is defined with respect to the worst starting tuple of $n$ points, which turns out to be when they all start from the worst point for a single walk on $G$ with edge rates $R$.

Oliveira \cite{Olive} conjectured that there exists an absolute constant $C>0$ such that the TV mixing time of the interchange process on an $n$-vertex graph $G$ with rates $R$, i.e.\ $\mix(\Cay(\sym_n,R))$, is at most $C \mix(n \text{ independent RWs on }G,R)$. 
  See \cite[Conjecture 2]{HS} and \cite[Question 1.12]{Hex} for two different strengthened versions of this conjecture. See \cite{HS} for a positive answer for high dimensional products. 

For the related \emph{exclusion} process, some progress on Oliveira's conjecture is made in \cite{Hex}. Returning to the interchange process, in the same paper the following more refined question is asked \cite[question 1.12]{Hex}: Is $\mix(\Cay(\sym_n,R))$ equal up to some universal constants to the mixing time of $n$ independent random walks on $(G,R)$ starting from $n$ \emph{distinct} locations? (See \cite{Hex} for precise definitions). We see that our result is related to finding some graphs $G$ such that the mixing time of $|G|$ independent SRW with edge rates $1$ on $G$, starting from distinct initial locations, is sensitive under small perturbations. In fact, the graphs we construct in this paper satisfy this property too, but in the interest of brevity we will not prove this claim (the proof is very similar to the one for the interchange process we do provide). This conjectured relation between the exclusion process and independent random walks is behind the heuristic we employed (and mentioned in \S\ref{s:sketch}) to construct our example. 

As we now explain,  if we did not require the initial locations to be distinct (as is the case in Oliveira's conjecture) such sensitivity could not occur. 
It is easy to show (e.g., \cite{Hex}) that when $|G|=n$
\[
  \frac 14  \rel(G,R) \log n
  \le \mix(n \text{ independent RWs on }G,R)
  \le 4  \rel(G,R) \log n ,
  \] where $\rel(G,R)$ is the \emph{relaxation time} of $(G,R)$, defined as the inverse of the second smallest eigenvalue of $-\mathcal{L}$, where $\cL$ is the infinitesimal Markov generator of the walk $(G,R)$. 
  The relaxation time is robust under small perturbations (see \S\ref{s:com}), and hence so is $ \mix(n \text{ independent RWs on }G,R)$. Our result that the mixing time is sensitive does not contradict Oliveira's conjecture, as he conjectured only an upper bound (which, in our case, is sharp for neither $S_n$ nor $S_n'$).

Loosely speaking, in order to make the mixing time of $n$ independent random walks starting at distinct locations of smaller order than (the robust quantity) $\rel(G,R) \log n$ it is necessary that the eigenvector corresponding to the minimal eigenvalue of $-\cL$ be localised on a set of cardinality $n^{o(1)}$. This is a crucial observation in tuning the parameters in our construction, which explains why for smaller areas of the graph (namely, $H_i$ with small index $i$) we `stretch' edges by a larger factor. This is the opposite of what is done in \cite{unifsensitivity}. 


Lastly, we comment that in contrast with a single random walk, in order to change the mixing time of $n$ independent random walks, starting from $n$ distinct initial locations, it does not suffice for the perturbation only to change the typical behavior of the walk, but rather it is necessary that it significantly changes the probabilities of some events in some sufficiently strong quantitative manner. See \cite{unifsensitivity} for a related discussion, about why it is much harder to construct an example where the uniform mixing time is sensitive than it is to construct one where the TV mixing time is sensitive.

\subsection{Quasi-isometries and robustness}\label{s:qirob}
\label{s:QI}
Since we hope this note will be of interest to both group theory and Markov chain experts, let us take this opportunity to compare two  similar notions related to comparison of the geometry of two graphs or of two reversible Markov chains. The first is the notion of quasi-isometry which is more geometric in nature. The second is the notion of robustness which is more analytic. In particular, we are interested in properties which are preserved by these notions.

This discussion is an important part of the background, but let us advise the readers that it is not necessary to appreciate our results, as they apply in both cases. For example, Theorem \ref{thm:short} shows that the mixing time is neither quasi-isometry invariant nor robust.

  A quasi-isometry (defined first in \cite{G81}) between two metric spaces $X$ and $Y$ is a map $\phi:X\to Y$ such that for some numbers $(a,b)$ we have
\[
\forall u,v\in X \qquad \frac{d(u,v)-b}{a}\le d(\phi(u),\phi(v))\le ad(u,v)+b
\]
where $d$ denotes the distance (in $X$ or in $Y$, as appropriate). Further we require that for every $y\in Y$ there is some $x\in X$ such that $d(\phi(x),y)\le a+b$. We say that $X$ and $Y$ are $(a,b)$ quasi-isometric if such a $\phi$ exists. (Our choice of definition is unfortunately only partially symmetric. If $\phi:X\to Y$ is an $(a,b)$ quasi-isometry then one may construct a quasi-isometry $\psi:Y\to X$ with the same $a$ but perhaps with a larger $b$.)

For a property of random walk that is defined naturally on infinite graphs we say that it is \emph{quasi-isometrically invariant} if whenever $G$ and $H$ are two quasi-isometric infinite graphs, the property holds for $G$ if and only if it holds for $H$ (the graphs are made into metric spaces with the graph distance). Examples include a heat kernel on-diagonal upper bound of polynomial type \cite{CKS87}, an off-diagonal upper bound \cite{G92} and a corresponding lower bound \cite{GHL09,B09}. A particularly famous example is the Harnack inequality \cite{BM18}. For a quantitative property of random walk naturally defined on finite graphs, such as the mixing time, one says that it is invariant to quasi-isometries if, whenever $G$ and $H$ are $(a,b)$-quasi-isometric, the property may change by a constant that depends only on $a$ and $b$ and not on other parameters. Similar notions may be defined for Brownian motion on Riemannian manifolds, and one may even ask questions like `if a manifold $M$ is quasi-isometric to a graph $G$ and Brownian motion on $M$ satisfies some property, does random walk on $G$ satisfy an equivalent property?' and a number of examples of this behaviour are known.

The notion of \emph{robustness} does not have a standard definition, and in particular the definitions in \cite{cf:Ding} and \cite{unifsensitivity} differ (and also differ from the definition we will use in this paper). Nevertheless they all have a common thread: a definition for Markov chains that implies that the property in question is preserved under quasi-isometry of graphs of bounded degree, but that makes sense also without any a priori bound on the transition probabilities. Here we will use the following definition. Let $\cM$ be the set of finite state Markov chains. We say that a $q:\cM\to[0,\infty]$ is \emph{robust} if for any $A \in (0,1] $ there exists some $K \in (0,1] $ such that the following holds. 
Assume $M$ and $M'$ are two irreducible reversible Markov chains on the same finite state space $V$ with stationary distributions $\pi$ and $\pi'$ and transition matrices $P$ and $P'$ satisfying
\begin{equation}\label{e:DFcomparison}
  \begin{aligned}
  \forall \, x &\in V,& A\pi'(x)  &\le \pi(x)\le \tfrac1A\pi'(x) \\
  \forall \, f &\in \R^V,& A\mathcal{E'}(f,f)&\le \mathcal{E}(f,f)\le  \tfrac1A\mathcal{E'}(f,f)
  \end{aligned}
\end{equation}
where $\mathcal{E}(f,f)$ and $\mathcal{E}'(f,f)$ are the corresponding Dirichlet forms, namely
\[
\mathcal{E}(f,g)\coloneqq \frac 12\sum_{u,v \in V}\pi(u)P(u,v)(f(u)-f(v))(g(u)-g(v))
\]
and similarly for $\mathcal{E}'$. Then $q(M)\ge Kq(M')$.

We also define robustness for Markov chains in continuous time, and in this case we replace $P(u,v)$ above with $\cL(u,v$) which is the infinitesimal rate of transition from $u$ to $v$, but otherwise the definition remains the same. 

If $P$ and $P'$ are simple random walks on $(a,b)$ quasi-isometric graphs with the same vertex set (with the quasi-isometry being the identity), whose maximal degrees are at most $D$, then \eqref{e:DFcomparison} holds with some $A$ depending only on $(a,b,D)$ \cite{diaconiscomparison}. Thus a robust quantity is also quasi-isometry invariant between graphs of bounded degree on the same vertex set. 

Each notion has its advantages and disadvantages relative to the other notion. Quasi-isometry has the flexibility that the spaces compared need not be identical or even of the same type, indeed the fact that a Lie group (a continuous metric space, indeed a manifold) is quasi-isometric to any cocompact lattice of it (a discrete metric space) plays an important role in group theory. Robustness has the advantage that unbounded degrees are handled seamlessly.

Returning to our results, since the examples of our Theorem \ref{thm:short} are not of bounded degree, it is natural to ask if they satisfy a comparison of Dirichlet form of the form \eqref{e:DFcomparison}. In fact, this is true because in said examples our pair of sets of generators $S_n$ and $S_n'$ (from the statement of Theorem \ref{thm:short}) satisfy for all $n$ that $S_n \subset S_n'$ and that any
$s' \in S_n' \setminus S_n$ can be written as $s_{1}(s')s_2(s')s_3(s') \in S_n^3=\{xyz:x,y,z \in S_n \} $ in a manner satisfying that 
\begin{equation}
\label{e:comparisoncayley}
\max_{s \in S_n} \sum_{s'\in S_n' \setminus S_n} \sum_{i=1}^3 \mathbbm{1}\{s_i(s')=s \}
\le 2. 
\end{equation}
It is standard and not difficult to see that \eqref{e:comparisoncayley} implies the comparison of Dirichlet forms condition \eqref{e:DFcomparison} (see e.g.\ \cite[Theorem 4.4]{bere}). 
Thus, the examples of Theorem \ref{thm:short} also satisfy \eqref{e:DFcomparison} with  $A$ being a universal constant. We remark that in general   $S \subset S' \subseteq S^3$ is sufficient for deriving \eqref{e:DFcomparison} only with an $A$ that may depend on  $|S'|$.

\subsection{Remarks and open problems}
We start with a remark on the Liouville property problem, a problem which for us was a significant motivation for this work. An infinite graph with finite degrees is called Liouville if every bounded harmonic function is constant (a function $f$ on the vertices of a graph is called harmonic if $f(x)$ is equal to the average of $f$ on the neighbours of $x$ for all $x$).

An open problem in geometric group theory is whether the Liouville property is quasi-isometry invariant in the setup of Cayley graphs (and, in the spirit of the aforementioned question of Benjamini, whether it is preserved under deletion of some generators, possibly by passing to a subgroup, if the smaller set of generators does not generate the group). The problem of stability of the Liouville property is related to that of mixing times. Indeed,  the example of T.\ Lyons \cite{L87} mentioned above which is a base for all previous examples for sensitivity was in fact an example for the instability of the Liouville property (for non-transitive graphs).

A result of Kaimanovich and Vershik (see \cite{KV83} or \cite[Ch.\ 14]{Lyons}) states that for Cayley graphs, the Liouville property is equivalent to the property of the walk having zero speed. Of course, our graphs being finite means there is no unique number to be designated as `speed', as in the Kaimanovich-Vershik setting. But still it seems natural to study the behaviour of $\textrm{dist}(X_t,1)$ as a function of $t$, where $X_t$ is the random walk, 1 is the identity permutation (and the starting point of the walker), and dist is the graph distance with respect to the relevant Cayley graph (with respect to $S_n$ or $S_n'$, as the case may be). Interestingly, perhaps, the functions increase linearly for the better part of the process for both our $S_n$ and $S_n'$, so we cannot reasonably claim we show some version of instability for the speed for finite graphs. (We will not prove this claim, but it is not difficult.)

Due to the relation to the Liouville problem, there is interest in reducing the degrees in Theorem \ref{thm:short}. We note that since our $S_n$ is a set of transpositions, we must have $|S_{n}| \le \binom{n}{2}\asymp \left(\frac{\log | \mathfrak{S}_{n}|}{\log \log |S_{n}|}\right)^{2}$.  As explained in the proof sketch section above, in our construction, there is a set $K \subset [n]\coloneqq  \{1,\ldots,n \} $ such that $|K| = n(1-o(1))$ and all of the transpositions of the form $(a,b)$ with $a,b \in K$ belong to $S_n$. Hence  $|S_n| \asymp n^2$.  

Let us mention two possible approaches to reduce the size of $S_n$. The first is to replace the complete graph over $K$ in the construction by an expander. In this case we will have $|S_{n}| \asymp n$. It seems reasonable that this approach works, but we have not pursued it. Let us remark at this point that the mixing time of the interchange process on an expander is not known, with the best upper bound being $\log^2 n$ \cite{AK} (see also \cite{Hex}).

The second, and more radical, is to replace the $\binom{|K|}{2}$ transpositions corresponding to pairs from $K$ by some number (say $m$, but importantly independent of $n$) of random permutations of the set $K$, obtained by picking  $m$ independent random perfect matchings of the set $A$, and for each perfect matching taking the permutation that transposes each matched pair. (If $|K|$ is odd, we keep one random element unmatched.) Note that the Cayley graph is no longer an interchange process, and that approximately $n^2$ elements have been replaced by a constant number. The degree would still be unbounded because of the other part of the graph. Again, we did not pursue this approach. One might wonder if it is possible to replace the entire graph, not just $K$, by matchings, but this changes the mixing time significantly.


\begin{question}
\label{q:1}
Can one take the set of generators $S_n$ to be of constant size? (Certainly, not with transpositions but with general subsets of $\sym_n$, or with other groups). If not, can one take $|S_n|$ to diverge arbitrarily slowly as a function of $|G_n|$? Is there a relation between the degree of the graph and the maximal amount of distortion of the mixing time which is possible?
\end{question}

A related question is the following.
\begin{question}
\label{q:2}
Does the aforementioned question of Benjamini have an affirmative answer for \emph{bounded degree} Cayley graphs?
\end{question}
Here are two questions about the sharpness of our $\log\log\log$ term.
\begin{question}
\label{q:3}
Does there exist a sequence of finite groups $G_n$ of diverging sizes, and sequences of generators $S_n \subset S_n' \subseteq S_n^i$ for some $i \in \N$ (independent of $n$) for all $n$, such that $|S_n'| \lesssim |S_n|$ and
\begin{equation}
\label{e:q15}
\mix(G_{n},S_{n}') \gtrsim \mix(G_{n},S_{n}) \log |G_{n}|?
 \end{equation}
\end{question}
 
\begin{question}
\label{q:4} Can one have in the setup of Theorem \ref{thm: 2}
\begin{equation}
\label{e:q16}
\min_{x \in V_n} \mix(G_{n},W_n,x) \gtrsim \mix(G_{n})\log |V_n|  ? 
\end{equation}
\end{question}
The opposite inequalities to \eqref{e:q15} and \eqref{e:q16} hold since the spectral gap  is a quasi isometry invariant (see \S\ref{s:com}) and on the other hand determines the mixing time of a random walk on an $n$-vertex graph up to a factor $2 \log n$ (see, e.g., \cite[\S12.2]{cf:LPW}).

Our last question pertains to Theorem \ref{thm:L2mix} below. It is inspired by a question of Itai Benjamini on the Liouville property in the infinite setting.
\begin{question}
Let $G=(V,E)$ be a finite connected vertex-transitive graph. Is the uniform (or $L^2$) mixing time robust under bounded perturbations of the edge weights? (certainly, this is open only when the perturbation does not respect the transitivity). Likewise, does there exist some $C(a,b,d)>0$ (independent of $G$) such that if the degree of $G$ is $d$ and $G'$ is $(a,b)$-quasi-isometric to $G'$ (which, again, need not be vertex-transitive), then the uniform mixing times  of the SRWs on the two graphs can vary by at most a $C(a,b,d)$ factor?
\end{question}

We end the introduction with a few cases for which the mixing time is known to be robust. Robustness of the TV and  $L_{\infty}$  mixing times for all reversible Markov chains under changes to the holding probabilities (i.e.\ under changing the weight of  each loop by at most a constant factor) was established in \cite{PS} by Peres and Sousi and in \cite{cf:L2} by J.H.\ and Peres. Boczkowski, Peres and Sousi \cite{BPS} constructed an example demonstrating that this may fail without reversibility. Robustness of the TV and  $L_{\infty}$  mixing times for
general (weighted) trees under bounded perturbations of the edge weights was established in \cite{PS} by Peres and Sousi and in \cite{cf:L2} by J.H.\ and Peres. Robustness of  TV mixing times for
general trees under quasi-isometries (where one of the graphs need not be a tree, but is `tree-like' in that it is quasi-isometric to a tree) was  established in \cite{berry} by Addario-Berry and Roberts.

In many cases known robust quantities provide upper and lower bounds on the mixing time which are matching up to a constant factor. For example, in the torus $\{1,\dotsc,\ell\}^d$ with nearest neighbour lattice edges the mixing time is bounded above by the isoperimetric profile bound on the mixing time \cite{cf:Evolving} and  below by the inverse of the spectral gap. For a fixed $d$ both bounds are $\Theta(\ell^2)$. As  both quantities are robust, we get that any graph quasi-isometric to the torus would have mixing time $\Theta( \ell^2)$, as in the torus. In fact,  the same holds for bounded degree Cayley graphs of moderate growth (see e.g.\ \cite[\S7]{Hex}). Moderate growth is a technical condition, due to Diaconis and Saloff-Coste \cite{DSC:moderate}, who determined the order of the mixing time and the spectral gap for such Cayley graphs. Breuillard and Tointon \cite{BT} showed that for Cayley graphs of bounded degree this condition is equivalent in some precise quantitative sense to the condition that the diameter is at least polynomial in the size of the group. 

Lastly, in a recent work \cite{LW} Lyons and White showed that for finite Coxeter systems increasing the rates of one or more generators does not increase the $L_p$ distance between the distribution of the walk at a given time $t$ and the uniform distribution for any $p \in [1,\infty]$. Since multiplying all rates by exactly a factor $C$ changes the mixing time by exactly a factor $1/C$, this implies that the mixing time is robust under bounded permutations of the rates of the generators.

\subsection{Notation}
\label{s:notation}
We denote $[n]=\{1,\dotsc,n\}$. We denote by $\mathbb{P}_v$ probabilities of random walk starting from $v$, which should be a vertex of the relevant graph. We denote by $c$ and $C$ arbitrary positive universal constants which may change from place to place. We will use $c$ for constants which are small enough and $C$ for constants which are large enough. We will occasionally number them for clarity. We denote $X\lesssim Y$ for $X\le CY$ and $X\asymp Y$ for $X\lesssim Y$ and $Y\lesssim X$. We denote $X\ll Y$ for $X=o(Y)$. Throughout we do not distinguish between a graph $G$ and its set of vertices, denoting the latter by $G$ as well. The set of edges of $G$ will be denoted by $E(G)$.

\section{Preliminaries}
\label{s: Pre}
\begin{definition} Let $\Gamma$ be a finitely generated group and let $S$ be a finite set of generators satisfying $s\in S\iff s^{-1}\in S$. We define the Cayley graph of $\Gamma$ with respect to $S$, denoted by $\Cay(\Gamma,S)$, as the graph whose vertex set is $G$ and whose edges are
  \[
  \{(g,gs):g\in \Gamma, s\in S\}.
  \]
\end{definition}
\begin{definition} Let $G$ be a weighted graph and let $(r(e)_{e\in E(G)})$ be the weights. The interchange process on $G$ is a continuous-time process in which particles are put on all vertices, all different. Each edge $e$ of $G$ is associated with a Poisson clock which rings at rate $r(e)$. When the clock rings, the two particles at the two vertices of $e$ are exchanged.
\end{definition}
The interchange process is always well defined for finite graphs (which is what we are interested in here). For infinite graphs there are some mild conditions on the degrees and on $r$ for it to be well defined. The interchange process on a graph $G$ of size $n$ is equivalent to a random walk in continuous time $X_t$ on $\sym_n$ with the generators $S$ of $\sym_n$ being all transpositions $(xy)$ (in cycle notation) for all $(xy)$ which are edges of $G$. The rate of the transposition $(xy)$ is $r(xy)$. The position of the $i^{\textrm{th}}$ particle at time $t$ is then $X_t^{-1}(i)$, where the inverse is as permutations. 
\subsection{Comparison of Dirichlet forms}
\label{s:com}

Recall the condition \eqref{e:DFcomparison} for comparison of Dirichlet forms. When it holds 
it implies a comparison of the eigenvalues:
If $0=\lambda_1 \le \lambda_2 \le \cdots \le \lambda_{n}$ and  $0=\lambda_1 '\le \lambda_2 '\le \cdots \le \lambda_{n}'$  are the eigenvalues of $I-P$ and $I-P'$, respectively, then under \eqref{e:DFcomparison} (see e.g.\ \cite[Corollary 4.1]{bere} or \cite[Corollary 8.4]{cf:Aldous})
\begin{equation}
\label{e:lailai'}
A \lambda_i \le \lambda_i' \le \lambda_i/A \quad \text{for all } i. 
\end{equation}
The same inequality holds for the eigenvalues of the Markov generators $-\cL$ and $-\cL'$ in continuous time (that is, $\cL(x,y)=r(xy)$ for $x \neq y$ and $\cL(x,x)=-\sum_{y:\, y \neq x}r(xy)$, where $r(xy)$ is the rate of the edge $(xy)$ and with the convention that $r(xy)=0$ if $xy \notin E$).
The proof is the same as in the discrete case (see, again, \cite[Corollary 4.1]{bere}). 
The quantity $\lambda_2$ is called the \emph{spectral gap}. It follows that it is robust. 

\subsection{Mixing times}
\label{s:mixingpre}
We now define the relevant notions of mixing: total variation, $L_2$ and uniform. We start with the total variation mixing time which is the topic of this paper, and which we will simply call \emph{the} mixing time.
\begin{definition}
Let $X_t$ be a Markov chain on a finite state space (in continuous or discrete time) with stationary measure $\pi$, and denote the probability that $X_t=y$ conditioned on $X_0=x$ by $P_t(x,y)$. Then the mixing time is defined by
\[
\mix=\max_x\inf\{t\ge 0:||P_t(x,\cdot\,)-\pi||_{\mathrm{TV}}\le\tfrac14\}.
\]
\end{definition}
In discrete time we often assume that $X_t$ is lazy, i.e.\ that at each step, $\Pr(X_{t+1}=X_t)\ge \frac12$, and we will not state this explicitly. In particular, the mixing time in Theorem \ref{thm:short} is for the lazy chain. (Without laziness issues of bipartiteness and near bipartiteness  pop up, which have little theoretical or practical interest; see e.g.\ \cite{tave,PS} and \cite[Remark 1.9]{cf:Basu}). 

The other notions we are interested in are 
the $L_2$ and uniform
mixing time and the average $L_2$ mixing time. Here are the relevant definitions.
\begin{definition}Let $X_t$, $\pi$ and $P_t(x,y)$ be as above. Then the $L_2$ mixing time, the $L^\infty$ (or uniform) mixing time
and the average $L_2$ mixing time are, respectively,
\begin{align*}
  \mix^{(2)}&\coloneqq \min\{t:\max_x\|\Pr_x(X_t=\cdot )-\pi \|_{2,\pi}\le 1/2\},\\
  \mix^{\mathrm{unif}}&\coloneqq \min \left\{t:\max_{x,y}\left|\frac{\Pr_x(X_{t}=y)}{\pi(y)}-1\right|\le 1/4 \right\},\\
  \mix^{\mathrm{ave}}&\coloneqq\min \left\{t: \sum_{x} \pi(x) \|\Pr_x(X_t=\cdot )-\pi  \|_{2,\pi}^2  \le 1/4 \right\}.
\end{align*}
Here and below,
\[
\|\mu-\pi \|_{2,\pi}^2
\coloneqq\sum_{x}\pi(x) \left(\frac{\mu(x)}{\pi(x)}-1 \right)^2=-1+\sum_{x}\frac{\mu(x)^{2}}{ \pi(x)}.
\]
\end{definition}
The constants $\frac12$ and $\frac14$ do not play an important role and were chosen for convenience. We remark that in the reversible setting the $L^2$ and the $L^\infty$ mixing times satisfy $\mix^{\mathrm{unif}}=2\mix^{(2)}$, while even without reversibility  $\mix^{\mathrm{unif}}\le 2\mix^{(2)}$. See \cite[equation (8.5)]{MT06} and \cite[equation (2.2)]{cf:Spectral} for a proof in continuous time. The proof in discrete time is similar.

In the remainder of this section we show the following.
\begin{theorem}\label{thm:L2mix}
  The average $L_2$ mixing time is robust for reversible Markov chains in continuous time. 
\end{theorem}
An immediate corollary is that the (usual, not averaged) $L_2$ mixing time is robust in the transitive setup, under perturbations that preserve transitivity (in the discrete time case assuming the holding probabilities are bounded away from 0). By the remark above, the same holds for the uniform mixing time.
Theorem \ref{thm:L2mix} is not needed for the proofs of our main results. We added it for the sake of completeness. The proof is similar to the one in \cite{cf:Pittet}.

\begin{proof} 
  Let $\cL$ be the Markov generator, and let $0 =\lambda_1 < \lambda_2 \le \cdots \le \lambda_n$ be the eigenvalues of $-\cL$. Denote $P_t=e^{t\cL}$. 
  Then,
\begin{align}
  \|\Pr_x(X_t=\cdot )-\pi \|_{2,\pi}^2+1
  &=\sum_{y}\frac{\Pr_x(X_{t}=y)^{2}}{ \pi(y)}\nonumber\\
  &\stackrel{(*)}{=}\sum_{y}\frac{\Pr_x(X_{t}=y)\Pr_y(X_{t}=x)}{ \pi(x)}
  =\frac{\Pr_x(X_{2t}=x)}{\pi(x)},  \label{e:L2xx}
\end{align}
where in $(*)$ we used reversibility. 
Hence,
\begin{align}
  \sum_x \pi(x)\|\Pr_x(X_t=\cdot )-\pi \|_{2,\pi}^2 &=-1+\sum_{x}\Pr_x(X_{2t}=x) \nonumber\\
      \label{e:avel2}
      &=\mathrm{Tr}(P_{2t})-1=\sum_{i=2}^n \exp(-2 \lambda_i t).
\end{align}
Recalling the definition of the average $L_2$ mixing time we get
\begin{align}
  \mix^{\mathrm{ave}} 
  &=
  \inf \left\{t: \sum_{x}\Pr_x[X_{2t}=x]  \le 5/4 \right\} \nonumber\\
  &=
    \inf \left\{t: \sum_{i=2}^n \exp(-2\lambda_it) \le 1/4\right\}.
  \label{e:aveL2def}
\end{align}
Using \eqref{e:lailai'}  concludes the proof.
\end{proof}
\begin{remark}The same calculations can be done in discrete time, leading to  analogues of \eqref{e:avel2} and \eqref{e:aveL2def}: $  \sum_x \pi(x)\|\Pr_x(X_t=\cdot )-\pi \|_{2,\pi}^2 =\sum_{i=2}^n \beta_i^{2t}$ and so
  \[
  \mix^{\mathrm{ave}} = \inf \left\{t \in \mathbb{N} : \sum_{i=2}^n \beta_i^{2t} \le 1/4\right\},
  \]
  where $1=\beta_1>\beta_2 \ge \cdots \ge \beta_n>-1$ are the eigenvalues of the transition matrix $P$ (assuming $P$ is irreducible and aperiodic). This would allow to conclude a similar result in discrete time if it weren't for values of $\beta_i$ close to either 0 or $-1$. Both problems can be resolved by adding laziness, but in the interest of brevity we skip the details.
\end{remark}

\subsection{Geometric notions} Recall from \S\ref{s:qirob} the definition of $(a,b)$-quasi-isometry for metric spaces, and that when we say that graphs are $(a,b)$-quasi-isometric we are treating them as metric spaces with the graph distance as the metric.


\begin{definition}
\label{def: Cheeger}
Consider a reversible Markov chain  on a finite state space $\Omega $ with transition matrix $P$ (in continuous time, with generator $\cL$) and stationary distribution $\pi$. We define the {\em Cheeger constant} of the chain as  
\begin{equation*}
 \Phi\coloneqq \min_{A: \, 0< \pi(A) \le 1/2}Q(A,A^{c})/\pi(A), \quad \text{where} \quad
\end{equation*}
\[ Q(A,A^{c})\coloneqq \begin{cases}\sum_{x \in A,y \notin A }\pi(x)P(x,y) & \text{in discrete time} \\
\sum_{x \in A,y \notin A }\pi(x)\cL(x,y) & \text{in continuous time}. \\
\end{cases} \]
\end{definition}
We will also need a version for a subset of the graph (this is the discrete analogue of Dirichlet boundary conditions). 
\begin{definition}
\label{def: lambdaA}
Let $\Omega$, $P$, $\cL$ and $\pi$ be as above. 
Let $A \varsubsetneq \Omega $. We define $\Phi(A)\coloneqq  \min_{B \subset A }Q(B,B^{c})/\pi(B) $.

Further, we define $\gl(A)$ to be the smallest eigenvalue of the substochastic matrix obtained by restricting $I-P$ (respectively $-\cL$) to $A$. 
\end{definition}
The following discrete version of Cheeger's inequality under Dirichlet boundary conditions is well known, see e.g. 
\cite[(1.4) and Lemma 2.4]{cf:Spectral}. For every irreducible discrete- or continuous-time reversible chain, and every set $A$ with $\pi(A) \le 1/2$ we have that
\begin{equation}
\label{eq: restrictedcheeger}
\Phi^2(A)/4 \le \gl(A) \le \Phi(A), \qquad \frac{\Phi^2(A)}{4\max_{a \in A}|\cL(a,a)| } \le \gl(A) \le \Phi(A)
\end{equation}
in discrete and continuous time, respectively.

\begin{lemma}\label{lem:conductances}Let $G$ be a finite graph, $v$ a vertex of $G$ and $A_1,\dotsc,A_k$ the components of $G\setminus\{v\}$, i.e.\ of $G$ after removal of the vertex $v$ and all adjoining edges. Let $w_i\in A_i$ be vertices. Then the probability that random walk starting from $v$ hits $\{w_1,\dotsc,w_k\}$ at $w_i$ is proportional to the effective conductance from $v$ to $w_i$.
\end{lemma}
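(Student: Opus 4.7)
The plan is to exploit the fact that $v$ is a cut vertex separating the components $A_1,\dotsc,A_k$, so that the walk from $v$ can be analysed excursion by excursion. Each time the walker leaves $v$ it enters some $A_j$ and then either hits $w_j$ or returns to $v$ without touching any $w_i$, because $v$ is the only gateway between the components. Writing $q_j\coloneqq \Pr_v(\tau_{w_j}<\tau_v^+)$ for the probability that a single excursion from $v$ reaches $w_j$ before returning, a geometric decomposition over excursions shows that the probability $p_i$ of the walk hitting $w_i$ before all other $w_j$'s equals $q_i/\sum_{j}q_j$.

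To compute $q_j$, let $G_j$ denote the subnetwork of $G$ induced on $\{v\}\cup A_j$, and write $c_v^G$, $c_v^{G_j}$ for the total edge conductance at $v$ in $G$ and in $G_j$, respectively. Conditioning on the first step out of $v$, and using that the walks on $G$ and on $G_j$ have identical transitions from any vertex in $A_j$ (since every neighbour of $A_j$ lies in $\{v\}\cup A_j$),
\[
q_j \;=\; \frac{c_v^{G_j}}{c_v^G}\,\Pr_v^{G_j}\bigl(\tau_{w_j}<\tau_v^+\bigr).
\]
By the standard identity relating escape probabilities and effective conductance, $\Pr_v^{G_j}(\tau_{w_j}<\tau_v^+)=\cC_{\mathrm{eff}}^{G_j}(v\leftrightarrow w_j)/c_v^{G_j}$, so $q_j=\cC_{\mathrm{eff}}^{G_j}(v\leftrightarrow w_j)/c_v^G$.

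The remaining step identifies the conductance in the subnetwork with the conductance in the full network. Ground $v$ at voltage $0$ and hold $w_j$ at voltage $1$ in $G$; then on each $A_i$ with $i\neq j$ the voltage is harmonic with the single boundary value $0$ at $v$ and so vanishes identically, whence no current leaves $G_j$ and $\cC_{\mathrm{eff}}^G(v\leftrightarrow w_j)=\cC_{\mathrm{eff}}^{G_j}(v\leftrightarrow w_j)$. Substituting into $p_i=q_i/\sum_j q_j$ gives $p_i\propto\cC_{\mathrm{eff}}^G(v\leftrightarrow w_i)$, as claimed. The argument is essentially bookkeeping of standard electrical-network identities; the only thing to notice is that the cut-vertex structure lets us replace $G$ by $G_j$ both probabilistically (excursions stay in $\{v\}\cup A_j$) and electrically (voltages vanish outside $G_j$), so there is no substantial obstacle.
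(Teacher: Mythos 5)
Your proof is correct, and it takes a genuinely different route from the paper's. The paper argues by induction on $k$: the base case $k=2$ is handled by observing that the function $h(x)=\Pr_x(\text{hit }w_1\text{ before }w_2)$ is harmonic off $\{w_1,w_2\}$, so $1-h(v)$ and $h(v)$ are voltage drops while the currents in the two components must agree (one source, one sink); the inductive step glues $w_{k-1}$ and $w_k$ into a single node, applies the case $k-1$, and then separates the two using the $k=2$ case, with the ``craps principle'' invoked to justify that conditioning on hitting $\{w_{k-1},w_k\}$ first does not change the relative probability of $w_{k-1}$ versus $w_k$. You instead avoid induction entirely: the cut-vertex structure lets you decompose the walk from $v$ into excursions, each confined to a single $A_j$, so that the hitting probability $p_i$ is literally $q_i/\sum_j q_j$ with $q_j$ a per-excursion escape probability; the classical identity $\Pr_v^{G_j}(\tau_{w_j}<\tau_v^+)=\cC_{\mathrm{eff}}^{G_j}(v\leftrightarrow w_j)/c_v^{G_j}$, together with the observation that grounding $v$ forces the voltage to vanish identically on every $A_i$ with $i\neq j$, converts this into effective conductances in $G$. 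Your version is more self-contained (no induction, no appeal to the craps principle) and makes the role of the cut vertex explicit both probabilistically and electrically; the paper's version is shorter once one is willing to use the node-identification and conditioning tricks. Both reduce to the same standard electrical-network bookkeeping.
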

For a gentle introduction to electrical networks see \cite{DS84}.
\begin{proof}
  Denote by $T_{w_i}$ the hitting time of $w_i$ and by $T_W$ the hitting time of the set $\{w_1,\dotsc,w_k\}$. If the walker returns to $v$ before $T_W$ the process begins afresh, so it is enough to consider only the last excursion from $v$. In other words, the probabilities are proportional to the conditioned probabilities $\mathbb{P}_v(T_{w_i}=T_W \,|\,T_W<T_v)$ (we define $T_v$ to be the return time to $v$). Since each $w_i$ is in a different component of $G\setminus\{v\}$, these conditional probabilities are proportional to $\mathbb{P}_v(T_{w_i}<T_v)$. These are proportional to the effective conductance, see \cite[Exercise 2.47]{Lyons}.
\end{proof}
Let $G_{1}\coloneqq (V_{1},E_{1})$ be some graph. Let $G_{2}=(V_{2},E_{2})$ be a graph obtained from $G_{1}$ by `stretching' some of the edges of $G_{1}$ by a factor of at most $K$ (we say that $G_{2}$ is a $K$-\emph{stretch} of $G_{1}$). That is, for some $E \subset E_{1}$ we replace each edge $uv\in E $ by a path of length at most $K$ (whose endpoints are still denoted by $u$ and $v$). Note that $V_{1} \subset V_{2}$. The identity map is a $(K,0)$-quasi-isometry of $G_1$ and $G_2$.
\begin{lemma}
\label{p: LS}
 There exists a constant $c_d> 0$ (depending only on $d$) such that if $H$ is a simple graph of maximal degree $d$ and
 $G$ is a $K$-stretch of $H$, then 
 \begin{equation}
 \label{eq: LS1}
 \Phi(G) \ge c_d \Phi(H)/K,
 \end{equation}
 where $\Phi(G)$ and $\Phi(H)$ are the Cheeger constants of 
 $G$ and $H$, respectively. 
\end{lemma}
This is well known and easy to see. See e.g.\ \cite[Proposition 2.3]{unifsensitivity} for a proof. 
We finish this section with a simple lemma on stretched trees.
\begin{lemma}\label{l:hit2-h}
Let $T$ be a finite binary tree of height $\ell$, let $f:\{1,\dotsc,\ell\}\to\N$ be non-increasing, and let $G$ be the graph one gets by stretching each edge between levels $h-1$ and $h$ of $T$ to a path of length $f(h)$. Then for every $v$ in level $h$ of $T$ we have
  \[
  \Pr_G(\textrm{random walk starting from $v$ hits the root before hitting level }\ell)\le 2^{-h}.
  \]
\end{lemma}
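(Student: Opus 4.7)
The plan is to reduce the bound to a single-level statement and then exploit the monotonicity of $f$ via an electrical-network computation. Write $p_h$ for the probability that random walk on $G$ started at a vertex at level $h$ of $T$ (i.e.\ at graph-distance $\sum_{k=1}^{h}f(k)$ from the root along the skeleton) ever hits the root. I would first observe that, since $G$ is still a tree, the strong Markov property applied at the first visit to $v$'s skeleton parent $u$ gives the multiplicative recursion
\[
p_h \;=\; \pi_h\cdot p_{h-1}, \qquad p_0=1,
\]
where $\pi_h\coloneqq \Pr_v(\textrm{hit }u)$ does not depend on the identity of the level-$h$ vertex $v$ by the homogeneity of the construction. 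Consequently it suffices to prove $\pi_h\le\tfrac12$ for every $h\ge 1$; induction then yields $p_h\le 2^{-h}$.

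The heart of the proof is the bound $\pi_h\le\tfrac12$. For this I would use the voltage interpretation: setting the voltage equal to $1$ at $u$ and $0$ at infinity, $\pi_h$ equals the voltage at $v$. Because $G$ is a tree, removing $v$ splits it into three pieces: the stretched edge of length $f(h)$ leading up to $u$, and the two subtrees rooted at the children of $v$, joined to $v$ by stretched edges of length $f(h+1)$ (and then having further edges of lengths $f(h+2),f(h+3),\dots$). Let $R_1\coloneqq  f(h)$ be the resistance of the upward branch and let $R_{2}, R_3$ be the effective resistances from $v$ to $\infty$ through the left and right subtrees; Kirchhoff's law at $v$ then yields the standard formula
\[
\pi_h \;=\; \frac{R_{2,3}}{R_1+R_{2,3}}, \qquad R_{2,3}\coloneqq \bigl(R_2^{-1}+R_3^{-1}\bigr)^{-1}.
\]

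By the symmetry of the two subtrees, $R_2=R_3$, so $R_{2,3}=R_2/2$. Using the usual `short the levels' parallel-series reduction on a stretched binary subtree rooted at a child of $v$ (legitimate since all vertices on a fixed level have the same voltage by symmetry), I would compute
\[
R_2 \;=\; \sum_{k=0}^{\infty}\frac{f(h+1+k)}{2^{k}}, \qquad R_{2,3}\;=\;\sum_{j=1}^{\infty}\frac{f(h+j)}{2^{j}}.
\]
Here is where the non-increasing hypothesis enters: since $f(h+j)\le f(h)$ for every $j\ge 1$,
\[
R_{2,3}\;\le\;f(h)\sum_{j=1}^{\infty}2^{-j}\;=\;f(h)\;=\;R_1,
\]
so $\pi_h\le\tfrac12$, completing the induction.

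The only step I expect any subtlety in is the effective-resistance computation, where one has to be a little careful that the `short the levels' trick is valid (it is, by the symmetry of each subtree rooted at a child of $v$, which keeps all same-level vertices at equal voltage) and that the relevant series $\sum_k f(h+1+k)/2^k$ converges (which is immediate from $f\le f(1)$, also giving transience of the walk and the legitimacy of the voltage formula). Everything else is a routine application of the strong Markov property and Kirchhoff's law.
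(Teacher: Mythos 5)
Your proof is correct, and it takes a genuinely different route from the one in the paper. The paper's proof shorts \emph{all} levels of $T$ at once — turning $G$ into a one-dimensional chain with resistance $f(i)/2^i$ between levels $i-1$ and $i$ — and then invokes Lemma~\ref{lem:conductances} to express $\Pr_v(\text{hit root})$ as $R_2/R_1$, where $R_1=\sum_{i\ge 1}f(i)/2^i$ is the resistance from the root to infinity and $R_2=\sum_{i\ge 1}f(i+h)/2^{i+h}$ is that from $v$ to infinity; the monotonicity of $f$ then gives $R_1\ge 2^hR_2$ in one stroke. You instead decompose the hitting event into a product over levels via the strong Markov property, writing $p_h=\pi_h\,p_{h-1}$, and prove the per-level bound $\pi_h\le\tfrac12$ by a local Kirchhoff computation at $v$; the monotonicity of $f$ is invoked once per level in the estimate $R_{2,3}\le f(h)=R_1$. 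Both arguments rest on the same two ingredients (the level-shorting reduction, justified by symmetry, and the fact that $f$ is non-increasing), but yours isolates the contribution of each level and avoids the hitting-probability lemma, using only the basic two-endpoint voltage formula; the paper's is shorter and packages everything into a single resistance comparison. One small phrasing nit: removing $v$ from $G$ produces three components, but the one containing $u$ is the \emph{whole} upper part of the tree, not merely the stretched edge $v\!-\!u$ — you do handle this correctly by imposing the Dirichlet boundary condition at $u$ (so that what lies beyond $u$ is irrelevant to the voltage at $v$), but the sentence as written slightly misstates the decomposition.
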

\begin{proof}The symmetry of the problem allows us to identify all the vertices in each level of $T$ (before stretching). Consider the probability that random walk starting from $v$ hits level $\ell$ before hitting the root. After the identification we have the following.
  \begin{itemize}
  \item Level $\ell$ is just one vertex (which we also denote by $\ell$).
  \item Removing the vertex corresponding to level $h$ (which we also denote by $v$) disconnects the root from $\ell$.
    \end{itemize}
  Hence Lemma \ref{lem:conductances} may be used. 
  Suppressing the dependence on $\ell$, denote the resistances from the root and from $v$ to $\ell$ by $R_1$ and $R_2$, respectively. Then the probability to hit the root before hitting $\ell$ is $R_2/R_1$. 
  These resistances can be computed directly using parallel-series laws. Indeed, the resistance of $f(i)$ edges in a series is $f(i)$ and the resistance of $2^i$ parallel connections of this kind between $i$ and $i+1$ is $2^{-i}f(i)$. All in all we get
\[
R_1 =\sum_{i=1}^{\ell}\frac{f(i)}{2^i}\ge \sum_{i=1}^\ell \frac{f(i+h)}{2^{i}}
\ge \sum_{i=1}^{\ell-h}\frac{f(i+h)}{2^i}=2^{h}R_2,
\]
where the first inequality follows because $f$ is non-increasing. The assertion of lemma follows. 
\end{proof}

\subsection{A tail estimate for hitting times}
\label{s:LDandHT}
%

Recall that the \emph{hitting time} of a set $D$ is defined as $T_D\coloneqq \inf \{t\ge 0:X_t \in D\} $. Denote $\pi$ conditioned on $A$ by $\pi_A$, i.e.\ $\pi_A(a)= \mathbbm{1}\{a \in A \}\frac{\pi(a)}{\pi(A)} $. Using the spectral decomposition of $P_A$ (the restriction of the transition matrix $P$ to the set $A$) with respect to the inner product $\langle f,g \rangle_{\pi_A}\coloneqq \sum_{a}\pi_A(a)f(a)g(a) $, we get (see e.g.~\cite[Lemma 3.8]{cf:Basu} or \cite[Ch.\ 3]{cf:Aldous}), 
\begin{equation}
\label{eq: exitprob}
\min_{x,y \in A} \frac{\pi(x)}{|A|\pi(y)} \sum_{a \in A}  \Pr_{a }[T_{ A^{c}} > t]\le \Pr_{\pi_A}[T_{ A^{c}} > t] \le   e^{- \gl(A)t}
\end{equation}
in discrete or continuous time. 

 \section{Proof of Theorems \ref{thm:short} and \ref{thm:weighted}}
 \label{s:proofthm3}

Throughout this section we consider  the interchange process on a graph $G$ in continuous time in which all edges ring at rate 1 (Theorems \ref{thm:short} and \ref{thm:weighted} are formulated in discrete time, but translating the mixing time from continuous time to discrete time is simple and we explain this for Theorem \ref{thm:short} at the end of its proof, the explanation there holds for Theorem \ref{thm:weighted} equally). Since the claims of both theorems are asymptotic, we may and will assume that $n$ is sufficiently large.

 Let us start the proof by recalling elements of the construction already discussed in the proof sketch in \S\ref{s:sketch} and in other places in the introduction. We need to find a set of transpositions $S_n\subset\sym_n$ such that $\mix(\Cay(\sym_n,S_n))$ is small compared to either $\mix(\Cay(\sym_n,S_n'))$ for a second set of transpositions $S_n'$ such that $S_n\subseteq S_n'\subseteq S_n^3$ (in Theorem \ref{thm:short}) or to $\mix(\Cay(\sym_n,S_n,W_n))$ for some weights $W_n$ (in Theorem \ref{thm:weighted}). 
 We describe our set of transpositions using a graph $G$ on $n$ vertices, whose edges are the transpositions.  The construction has two parameters, $u\in\N$ and $\eps\in(0,\frac12)$ (both will be chosen later). We designate $u$ parts of $G$ and call them $H_1,\dotsc,H_u$ (we will use $H_i$ to denote both a subset of $[n]$ and the induced subgraph, and we will now describe them as graphs, thus describing also a part of $G$). The $H_i$ are constructed inductively as follows. The induction base, $H_1$ is a binary tree of depth $u$ whose edges have been replaced by paths of length $2^{u}$. To define $H_{i+1}$ given $H_{i}$ we label, in each vertex of each of the trees used to construct $H_{i}$ one child as `left' and the other as `right'. We denote, for each leaf $v$ of $H_{i}$, the number of left children on the path from the root to $v$ by $g(v)$. Recall the definition of the bad leaves $B_{i}$,
\begin{equation}\label{eq:defBi}
B_i\coloneqq \Big\{x\textrm{ leaf of }H_i:g(x)>\left(\sdfrac12+\eps\right)s_i\Big\}
\end{equation}
from \eqref{eq:defBi sketch}. We define $H_{i+1}$ as a forest of $|B_{i}|$ binary trees of depth $s_{i+1}$, with each edge replaced by a path of length $\ell_{i+1}$, with
 \[
 s_{i}\coloneqq  4^{i-1}u\qquad \ell_{i}\coloneqq  2^{u+1-i}
 \]
 and each tree rooted at a point of $B_{i}$ (so $H_{i+1}\cap H_{i}=B_{i}$ as sets). This terminates the description of the $H_i$. All this, we remind, was already discussed in \S\ref{s:sketch} with some additional explanations and motivation (and a figure depicting the gadget $\bigcup H_i$ on page \pageref{fig:gadget}).

We now claim that, uniformly in $\epsilon$,
\[
2^{4^{u-1}u}\le \bigg|\bigcup_{i=1}^u H_i\bigg|\le 2^{4^{u}u}.
\]
Indeed, the first inequality is clear because $H_u$ has at least one root (since $\eps<\frac 12$) and the second inequality comes from 
\[
|H_i|\le \ell_i2^{s_i+1}|B_{i-1}|
\le 2^{u+2-i+4^{i-1}u}\prod_{j=1}^{i-1}2^{4^{j-1}u}
\le 2^{u+2-i+4^{i-1}u(1+1/3)}\le 2^{3\cdot 4^{i-1}u}
\]
which can be summed readily to give
\begin{equation}
  \label{eq:|Hi|}
  \sum_{j=1}^i|H_j|\le 2^{4^iu},
\end{equation}
and in particular the case $i=u$ is what we need. Hence we may choose some $u=u_n$ such that $u4^u\asymp\log n$ (in particular $u\asymp\log\log n$) such that
\begin{equation}\label{eq:gadgetsmall}
\sum_{i=1}^u|H_i|\le n^{1/4},
\end{equation}
regardless of $\eps$ (we need here $n\ge 65,\!536$ to have $2^{4^uu}\le n^{1/4}$ for $u=1$). 
Fix such a $u$ for the rest of the proof.

The subgraph $\bigcup H_i$ is the `gadget', and the rest of the graph $G$ is a complete graph on a set of vertices $K$. The gadget connects to the complete graph via the good leaves of the $H_i$ (and all the leaves of the last one, $H_u$) so we define $K$ to also include those vertices. Thus we define
\[
K\coloneqq  \bigg([n]\setminus\bigcup_{i=1}^uH_i\bigg)
\cup\bigg(\bigcup_{i=1}^{u-1}\left(\{\textrm{leaves of }H_i\}\setminus B_i\right)\bigg)
\cup\{\textrm{leaves of }H_u\}.
\]
Let the edges of $G$ be all the edges of all the $H_i$ union with a complete graph on $K$ i.e.
\[
E(G)\coloneqq \bigg(\bigcup_{i=1}^uE(H_i)\bigg)\cup \{\{x,y\}:x,y\in K\}.
\]
This finishes the construction of $G$ (except for the choice of $\eps$), and hence of $S_n$. We delay the definitions of $S_n'$ and $W_n$ to \S\ref{s:pertoverview}.

Below, when we want to emphasize the dependence on $u$ and $\varepsilon$ we will write $G_n(u,\varepsilon)$ for $G$ and $S_n(u,\varepsilon)$ for $S_n$. We will also denote $G_n(\varepsilon)\coloneqq G_n(u_n,\varepsilon)$ and $S_n(\varepsilon)\coloneqq S_n(u_n,\varepsilon)$ (recall that $u_n$ is the value we fixed above such that $u_n4^{u_n}\asymp\log n$).
\subsection{An upper bound for the time to exit the gadget}
\leavevmode

\medskip
\noindent Throughout the proofs we will pick the parameter $\varepsilon$ so that $\eps>u^{-1/3}$.
\begin{lemma}
\label{l:LLilem}
  The expected  exit time from $H_i$, starting from a worst initial state in $H_i$ (i.e.\ the one maximizing this expectation), denoted by $L_i$, satisfies (uniformly in $i$)
\begin{equation}
\label{e:KiHi}
L_i \asymp\ell_i^2s_i = u4^u=:L. 
\end{equation}
\end{lemma}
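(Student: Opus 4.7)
The plan is to analyze the random walk on a single tree component of $H_i$ via a two-scale decomposition: a slow scale corresponding to a biased random walk on the unstretched binary tree $H_i'$, and a fast scale corresponding to one-dimensional random walks on each stretched edge. First I would introduce the \emph{embedded chain}, obtained by watching the walk only at visits to vertices of $H_i$ that correspond to vertices of $H_i'$. Using the gambler's-ruin identity on a path of length $\ell_i$, a walker leaving a tree vertex $v$ onto one of its incident stretched edges reaches the next tree vertex $v'$ with probability exactly $1/\ell_i$ and returns to $v$ with probability $1-1/\ell_i$. Summing uniformly over the incident edges at $v$ recovers simple random walk on $H_i'$; in particular, at every internal binary-tree vertex (degree $3$) the embedded walk has drift $2/3$ toward the leaves.

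Next I would compute the expected duration of one embedded step. Each excursion onto a stretched edge has unconditional expected duration $\Theta(\ell_i)$ (the hitting time of $\{0,\ell_i\}$ from an interior point of the path), and a $\mathrm{Geom}(1/\ell_i)$ number of such excursions precedes a successful traversal to a new tree vertex, giving expected time $\Theta(\ell_i)\cdot\ell_i=\Theta(\ell_i^2)$ per embedded step. Equivalently, the Doob $h$-transform with $h(x)=x$ shows that the walk conditioned on traversing from position $1$ to $\ell_i$ has expected duration $\Theta(\ell_i^2)$, so a single successful excursion already contributes the leading order term.

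Third I would estimate the number $N$ of embedded steps before the walk exits $H_i$. The embedded chain on a binary tree has drift $2/3$ toward the leaves at every internal vertex, so standard gambler's-ruin estimates on $\{0,\ldots,s_i\}$ yield $E[N]=O(s_i)$ uniformly in the starting vertex. For the matching lower bound, I would take the starting vertex to be the root of $H_1$ when $i=1$, and a vertex at depth $\lfloor s_i/2\rfloor$ in $H_i'$ when $i\ge 2$ (the root of $H_i$ itself cannot serve as a worst state for $i\ge 2$, since it lies in $B_{i-1}\subset H_{i-1}$ and is therefore already a boundary vertex of $H_i$). In either case, gambler's ruin and a direct biased-walk computation give $E[N]=\Omega(s_i)$.

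Combining the two scales, one argues that conditional on the embedded chain, the per-step durations are independent with mean uniformly $\Theta(\ell_i^2)$, so Wald's identity (in the conditional form) gives
\[
L_i \;\asymp\; E[N] \cdot \ell_i^2 \;\asymp\; s_i\,\ell_i^2 \;=\; 4^{i-1}u\cdot 4^{u+1-i}\;=\;4^u u\;=\;L,
\]
independent of $i$. The residual time between the embedded chain first hitting a boundary vertex of $H_i$ and the walk actually stepping out of $H_i$ is $O(1)$ in expectation, since each boundary vertex has a uniformly positive fraction of incident edges leading out of $H_i$, so this contribution is negligible. The main obstacle is the careful application of Wald's identity in continuous time with per-step durations that are not i.i.d.\ but only conditionally independent given the embedded chain; the rest is a routine combination of standard one-dimensional random walk estimates and the biased-walk analysis on the binary tree.
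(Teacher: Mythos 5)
Your proof follows essentially the same two-scale decomposition as the paper's: watch the walk at the vertices of the unstretched tree $H_i'$, observe that the embedded chain is simple random walk on a binary tree (exiting in $\Theta(s_i)$ steps), show that each embedded step has expected duration $\Theta(\ell_i^2)$, and multiply. The one difference is in how the per-step duration and its independence from the embedded trajectory are handled: you decompose each step into a $\mathrm{Geom}(1/\ell_i)$ number of one-dimensional excursions and flag ``Wald's identity with per-step durations only conditionally independent given the embedded chain'' as the main obstacle, whereas the paper dispatches this in one line by observing that, between consecutive visits to degree-$3$ vertices, the walk lives in a star of three equal-length paths whose $3$-fold symmetry makes the step duration \emph{exactly} independent of which endpoint is reached (and hence of the entire embedded trajectory), so no Wald-type care is needed. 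That same symmetry also lets the paper compute $\ex(\sigma_{j+1}-\sigma_j)\asymp \ell_i^2$ directly as the exit time from a single interval with an accelerated endpoint, avoiding your geometric-excursion bookkeeping. Your lower-bound choice of starting vertex (depth $\lfloor s_i/2\rfloor$, or the root when $i=1$) is a sound way to make the ``worst initial state'' concrete; the paper leaves this step implicit.
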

As this lemma is standard we only sketch its proof.
\begin{proof}[Proof sketch] Examine the random walk $X$ on $H_i$ and let $\sigma_0,\sigma_1,\dotsc$ be the times when it reaches a vertex of degree 3 (we require also $X_{\sigma_{i+1}}\ne X_{\sigma_i}$). Between $\sigma_i$ and $\sigma_{i+1}$ the walk is in a part of the graph which is simply 3 paths of length $\ell_i$. By symmetry it reaches each of the 3 ends of these lines with equal probability. Hence $X_{\sigma_i}$ is identical to a random walk on a binary tree of depth $s_i$. The distance of random walk on a binary tree from the root has the same distribution as a random walk on $\N$ with a drift towards infinity, and hence a simple calculation shows that the expected exit time is $Cs_i$. To get back to random walk on $H_i$ we note that, even if we condition on $X_{\sigma_0},X_{\sigma_1},\dotsc$ then the local symmetry says that the times $\sigma_{i+1}-\sigma_i$ are independent of $X_{\sigma_i}$ and of one another. For each $i$ we have $\ex(\sigma_{i+1}-\sigma_i)\asymp \ell_i^2$, because this is the same as the exit time from the interval $\{0,\dotsc,\ell_i\}$, where the walk exits 0 at rate 3 (and the other vertices at rate 2), again by the symmetry. 
\end{proof}
Below we employ the notation $L=u4^u$ from the above lemma.
Let $ T_{K}:=\inf \{t:X_t \in K \}$ be the hitting time of the complete graph $K$. Recall that we have fixed a choice of  $u=u_n$  satisfying that $u4^u\asymp\log n$.    
\begin{proposition}
\label{p:tauHi}
There exist some constants $C$ and $c'$ such that for every $\varepsilon$ and $n$ we have that the graph $G_n(\varepsilon)$ satisfies  for all $i\in[u]$ that
\begin{equation}
\label{e:escape4b}
\sum_{v \in H_{i} }\Pr_v\Big[T_{K}> \frac{CL}{\eps^4} \Big] \le C \frac{1}{|H_{i}|^{c'\eps^2}}.
\end{equation}
Consequently, if $E$ is the event that for all $i \in [u]$ all particles whose initial location is in $ H_i$ hit the complete graph $K$ before time $CL/\eps^4$, then $\lim_{n\to\infty} \Pr(E)=1$, uniformly in $\eps$.
\end{proposition}
We recall our standing assumptions that $n$ is sufficiently large and that $\eps>u_n^{-1/3}$ (in particular, `uniformly in $\eps$' above means `uniformly in $\eps\in(u^{1/3},\frac12)$'). 
Let us remark that the $\eps^4$ term is not optimal, but this is not a priority for us.
\begin{proof}[Proof of Proposition \ref{p:tauHi}]
  The assertion of the last sentence of the proposition follows from \eqref{e:escape4b} by a union bound over the particles (recall that in the interchange process each particle is performing a random walk). We also need here our assumption that $\eps > u^{-1/3}$, as it gives  $\sum_{i=1}^u 1/|H_i|^{c\eps^2}=o(1) $, since $|H_i| \ge 2^{s_i} = 2^{4^{i-1}u}$. 

Thus we need to verify \eqref{e:escape4b}. Let $m=m(\eps)\ge 1$ be some integer parameter to be fixed later. 
Let 
\[
\rho_i(t)\coloneqq \sum_{v \in H_{i} }\Pr_v[\min\{T_{[n]\setminus W_i},T_{K}\}>t],
\qquad W_i=W_i(m)\coloneqq \bigcup_{\mathclap{j \in[i-m,i+m] \cap [u] }} H_j,
\]
where $T_{[n]\setminus W_i}$ is the hitting time of $[n]\setminus W_i$ (or the exit time of $W_i$, if you prefer). 
The proof of Proposition \ref{p:tauHi} is concluded by combining the following two lemmas.
Indeed, let $m$ be the minimal value which satisfies the requirement of Lemma \ref{lem:TJibeforetau}, so $4^m\asymp \eps^{-2}$. We use the same value of $m$ in Lemma \ref{lem:rhoit} and get that for $t>CL/\eps^4$ we have \eqref{e:escape}. Combining this with (\ref{e:escape2}) 
gives the proposition. 
\end{proof}
\begin{lemma}
  \label{lem:rhoit}
  For all $i\in[u]$ and all $t\ge C_116^mL$ for some $C_1$ sufficiently large,
\begin{equation}
\label{e:escape}
\rho_i(t)\le \frac{1}{|H_i|^2}.
\end{equation}
\end{lemma}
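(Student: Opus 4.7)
Define $A := W_i \setminus K$. The event $\{\min(T_{[n]\setminus W_i}, T_K) > t\}$ is precisely $\{T_{A^c} > t\}$, hence
$$
\rho_i(t) = \sum_{v \in H_i \cap A} \Pr_v[T_{A^c} > t] \le \sum_{a \in A} \Pr_a[T_{A^c} > t].
$$
Every vertex of $A$ has degree at most $3$ in the ambient graph: subdivision vertices have degree $2$, internal tree vertices and bad leaves identified with roots of the next $H_j$ have degree $3$, and non-bad leaves lie in $K$ and are excluded from $A$. In particular $\pi(y)/\pi(x) \le 3/2$ for $x,y \in A$, so \eqref{eq: exitprob} gives $\rho_i(t) \le \tfrac{3}{2}|A|\,e^{-\lambda(A)t}$, while \eqref{eq:|Hi|} bounds $|A| \le |W_i| \le 2^{4^{i+m}u}$. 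It therefore suffices to establish $\lambda(A) \gtrsim 1/\ell_{\max}^2$, where $\ell_{\max} := \max\{\ell_j : j \in [i-m, i+m] \cap [1, u]\} \asymp 2^{u-i+m}$ is the largest stretch factor appearing inside $W_i$: since $L = u\cdot 4^u$ and $\ell_{\max}^2 \asymp 4^{u-i+m}$, this yields $\lambda(A)\cdot 16^m L \gtrsim u\cdot 4^{i+m}$, and for $C_1$ sufficiently large, $\rho_i(t) \le 2^{-3\cdot 4^{i-1}u} \le |H_i|^{-2}$.

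To bound $\lambda(A)$, view $A$ as a stretched version of an unstretched meta-tree $A'$, obtained from $A$ by collapsing each subdivided path of length $\ell_j$ back to a single edge. A straightforward adaptation of Proposition~\ref{p: LS} to restricted Cheeger constants (its proof applies verbatim with all sums restricted to $A$) yields $\Phi(A) \gtrsim \Phi(A')/\ell_{\max}$. Combining this with the restricted Cheeger inequality \eqref{eq: restrictedcheeger} and the degree bound $3$ in $A$ gives $\lambda(A) \gtrsim \Phi(A')^2/\ell_{\max}^2$. The task thus reduces to showing $\Phi(A') \gtrsim 1$.

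The unstretched meta-tree $A'$ is a collection of binary trees $H_j'$ for $j \in [i-m,i+m]\cap[1,u]$, glued at the identifications (bad leaves of $H_j'$ are the same vertices as roots of $H_{j+1}'$), with absorbing boundary at all non-bad leaves (which lie in $K \subset A'^c$) and at the exterior boundary of $W_i$. At every internal vertex of every $H_j'$ the walk has downward drift $1/3$ toward the absorbing leaves, and by the Chernoff-type tail bound underlying \eqref{eq:defBi}, at least half the leaves of every $H_j'$ are non-bad under our standing assumption $\epsilon > u^{-1/3}$. For any $B \subset A'$, a case analysis on the intersections $B \cap H_j'$ (taking care not to double-count contributions at the shared bad-leaf/root vertices) then shows that the edges from $B$ to the absorbing non-bad leaves already supply $Q(B,B^c) \gtrsim \pi(B)$; this is the classical Cheeger bound for biased random walk on a tree with absorbing leaves.

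The step I anticipate will be most delicate is the rigorous verification that $\Phi(A') \gtrsim 1$. Although the drift heuristic makes the bound transparent, the case analysis has to handle both subsets contained in a single tree $H_j'$ and those straddling several trees via the identifications, while ensuring that the extra absorbing boundary contributed by the exterior of $W_i$ does not interfere with the mechanism that most leaves of each $H_j'$ are absorbing.
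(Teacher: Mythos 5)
Your high-level strategy matches the paper's: use \eqref{eq: exitprob} to reduce to bounding $\lambda(A)$ for $A=W_i\setminus K$, lower-bound $\lambda(A)$ by passing to the restricted Cheeger constant \eqref{eq: restrictedcheeger} and unstretching via Proposition~\ref{p: LS}, then do the arithmetic with $\ell_{i-m}^{-2}\asymp 4^{i-u-m}$. The arithmetic is fine (up to taking $C_1$ a bit larger so that the tail beats $|H_i|^{-2}\ge 2^{-6\cdot 4^{i-1}u}$, not just $2^{-3\cdot 4^{i-1}u}$), and the degree-ratio factor $3/2$ is harmless but unnecessary: in continuous time with unit rates the stationary distribution of a single particle is uniform, so the prefactor in \eqref{eq: exitprob} is exactly $1/|A|$.

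However, the step you flag as delicate --- $\Phi(A')\gtrsim 1$ --- is where your reasoning goes astray. You attribute the expansion to "downward drift toward the absorbing leaves" and to the fact that most leaves are non-bad, and you assert that for any $B\subset A'$ the edges from $B$ to the absorbing non-bad leaves already supply $Q(B,B^c)\gtrsim\pi(B)$. That is false as stated: take $B$ to be any ball of internal vertices of some $H_j'$ at distance $\gg\operatorname{diam}(B)$ from all leaves; then $B$ has zero edges to absorbing leaves, yet $|\partial B|\ge|B|$ because $B$ induces a forest and every vertex of $A'$ has degree $\ge 2$. The expansion of $A'$ has nothing to do with proximity to (or the fraction of) absorbing leaves; it is purely the branching of the tree. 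The clean argument, which is what the paper's one-line invocation of "the binary tree has positive Cheeger constant" means, is that $A'$ is a finite subgraph of the infinite binary tree, and the restricted Cheeger constant (a minimum over subsets of a fixed finite set $A'$) is at least the ambient Cheeger constant of the infinite tree, which is $\ge 1$. Concretely: for $B\subset A'$, the induced subgraph is a forest with $\le|B|-1$ internal edges, and since degrees in the infinite tree are $3$ (except possibly one vertex of degree $2$), $|\partial B|\ge 3|B|-1-2(|B|-1)=|B|+1$. No drift, no case analysis, and no role for $\eps$ or the non-bad leaf fraction.
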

\noindent (As usual, $C_1$ is an absolute constant. In particular it depends on neither $i$ nor $t$.)
\begin{lemma}
\label{lem:TJibeforetau}
There exist absolute constants $C,c>0$ such that for all $\varepsilon \in (0,1/2)$ if $4^m\eps^2 \ge C$  then for all $n \ge N_0$ the graph $G(n,u,\varepsilon)$ satisfies  for every $i\in[u]$ that
\begin{equation}
\label{e:escape2}
\sum_{v \in H_{i} }\Pr_v[T_{[n]\setminus W_i} <T_{K}] \le \frac{C}{|H_{i}|^{c\eps^2}}.
\end{equation}

\end{lemma}
\begin{proof}[Proof of Lemma \ref{lem:rhoit}]
  Let $\cM$ be the restriction of the Markov generator $\cL$ to $W_i \setminus K $ (i.e.\ this is the generator of the chain killed upon exiting $W_i \setminus K $). Let $\lambda$ be the smallest eigenvalue of $-\cM$. It will be convenient to extend the definition $\ell_{i} =2^{u+1-i}$ also to negative $i$. 
  We now claim that
\begin{equation}
\label{e:Dev}
 \lambda \gtrsim \ell_{i-m}^{-2}.
\end{equation}
To see this, let $W$ be an arbitrary connected component of $W_i$. We first apply Lemma \ref{p: LS} to $W$. Since it is a piece of an infinite binary tree with edges stretched to various extents, but not more than $\ell_{i-m}$, and since the infinite tree has positive Cheeger constant, we get that the Cheeger constant of $W$ is at least $\ell_{i-m}^{-1}$. Applying Cheeger's inequality \eqref{eq: restrictedcheeger} to $W\setminus K$ embedded in an infinite, stretched tree 
shows \eqref{e:Dev}.

Using this we get that
\begin{equation}
\label{e:cheap1}
\begin{split}
  \rho_i(t) & \stackrel{\textrm{\eqref{eq: exitprob}}}{\le}
  |W_i| \exp(- \lambda t   )
  \stackrel{\text{(\ref{e:Dev},\ref{eq:|Hi|})}}{\le}
  2^{4^{i+m}u} \exp(-c\ell_{i-m}^{-2}t   )  \\
  &\;\,\stackrel{(*)}{\le} 2^{4^{i+m}u} \exp(-c 4^{i-u-m-1}\cdot C_116^m\cdot u4^u   )
  \stackrel{\textrm{\eqref{eq:|Hi|}}}{\le}
  \frac{1}{|H_{i}|^{2}},  
\end{split}
\end{equation}
where the inequality marked $(*)$ follows from the definitions of $\ell_i$ and $L$ (recall that $L=u4^u$) and from the bound on $t$ in the statement of the lemma. In the last inequality we also use that $C_1$ is sufficiently large.
\end{proof}
\begin{proof}[Proof of Lemma \ref{lem:TJibeforetau}]
We divide the event $T_{[n]\setminus W_i}<T_{K}$ into two cases: that the random walk hits $H_{i+m+1}$ before hitting $K$, and that it hits $H_{i-m-1}$ before hitting $K$. Denote these two events by $\cU$ and $\cD$ respectively (notice that if $i\ge u-m$ then $\cU$ is empty and if $i\le m+1$ then $\cD$ is empty). The letters $\cU$ and $\cD$ stand for `up' and `down', with the orientation being as in Figure \ref{fig:gadget} (page \pageref{fig:gadget}).

We first handle $\cU$.
 For $\cU$ to happen there  must be some time $\sigma<T_K$ such that $X_{\sigma}\in B_{i+m-1} \subset H_{i+m}$ and, further, the walker is contained in $H_{i+m}$ between time $\sigma$ and the first hitting time to the set of leaves of $H_{i+m}$ which (on the event $\cU$) occurs at $B_{i+m}$. Assume such a $\sigma$ exists and examine the walker between $\sigma$ and $T_{B_{i+m}}$. The walker is not simple (because being after $\sigma$ conditions it to not return to the roots of $H_{i+m}$) but this is not important for us. The symmetry of the tree implies that at the first time after $\sigma$ that the walker visits a leaf of $H_{i+m}$, the difference between the number of left and right turns along the path the walker takes is distributed like a sum of i.i.d.\ $\pm 1$ variables (giving equal probability to each value). In particular, the probability that the target leaf is in $B_{i+m}$ is
\[
\Pr\Big(\textrm{Bin}(s_{i+m},\tfrac 12)>\big(\tfrac 12+\eps\big)s_{i+m}\Big)\le \exp(-c\eps^2s_{i+m}).
\]
Our assumption that a time $\sigma$ exists only reduces the probability further so we get, for every $v\in H_i$, that $\Pr_v(\cU)\le \exp(-c\eps^2s_{i+m})$. Summing over $v$ gives
\[
\sum_{v\in H_i}\Pr_v(\cU)\le |H_i|\exp(-c\eps^2s_{i+m})
\stackrel{\textrm{\eqref{eq:|Hi|}}}{\le} 2^{4^iu}\exp(-c\eps^24^{i+m}u),
\]
and we see that if $m$ satisfies $4^m\ge 2/c\eps^2$ then this sum is smaller than, say, $1/|H_i|^2$. Require $m$ to satisfy that, but do not fix its value yet (there will be a similar requirement below). 

We move to the estimate of $\cD$. We use Lemma \ref{l:hit2-h} and get that for any $v$ in level $h$ of $H_i$ (we are counting levels before stretching here) or in the path between level $h$ and $h+1$, we have
\[
\Pr_v(\cD)\le  2^{-h-\sum_{j=i-m}^{i-1}s_{j}}.
\]
(Note that Lemma \ref{l:hit2-h} measures a larger event. Indeed, $\cD$ is the event to hit the root of our tree before hitting level $i+m$ or $K$, so it is smaller than the event to hit the root before $i+m$, which is what is measured by Lemma \ref{l:hit2-h}.) 
The number of vertices at level $h$, or in a path between level $h$ and $h+1$, is $|B_{i-1}|\cdot 2^h\cdot \ell_i$ so we get
\begin{equation}\label{eq:sum PvD}
  \sum_{v\in H_i}\Pr_v(\cD)\le\sum_{h=0}^{s_i-1}|B_{i-1}|2^{h}\ell_i\cdot   2^{-h-\sum_{j=i-m}^{i-1}s_{j}}=s_i\ell_i\frac{|B_{i-1}|}{  2^{\sum_{j=i-m}^{i-1}s_{j}}}.
\end{equation}
Denote $p_j:=\Pr\big(\textrm{Bin}(s_{j},\tfrac 12)>(\tfrac 12+\eps)s_{j}\big)$. Then $|B_{j}| \le |B_{j-1}|2^{s_{j}}p_{j}$ and further $p_j \le\exp \left(-c\eps^2 s_{j}\right) $.  Iterating this gives 
\[|B_{i-1}| \le |B_{i-m-1}|2^{\sum_{j=i-m}^{i-1}s_{j}}\prod_{j=1}^{i-1}p_j \le |B_{i-m-1}|2^{\sum_{j=i-m}^{i-1}s_{j}}\exp \left(-c\eps^2 s_{i-1}\right).
\]
Substituting this in \eqref{eq:sum PvD} gives
\[
\sum_{v\in H_i}\Pr_v(\cD)\le s_i\ell_i|B_{i-m-1}|\exp(-c\eps^2s_{i-1})
\stackrel{\mathclap{\textrm{\eqref{eq:|Hi|}}}}{\le}
s_i\ell_i2^{4^{i-m-1}u}\exp(-c\eps^2s_{i-1}).
\]
We see that taking $m$ so that $4^{m}\eps^2 $ is sufficiently large makes the term $2^{4^{i-m-1}u}=2^{s_{i-1}4^{-m+1}}$ negligible compared to the exponential (recall that $s_i=4^{i-1}u$). This is the last requirement from $m$ and we may fix its value. Further, our standing assumption that $\eps>u^{-1/3}$ means that the $s_i\ell_i$ terms are also negligible with respect to $\exp(-c\eps^2s_{i-1})$. Hence 
\[
\sum_{v\in H_i}\Pr_v(\cD)\lesssim \exp(-c'\eps^2s_{i-1}),
\]
as needed. The lemma is thus proved, and so is Proposition \ref{p:tauHi}.
\end{proof}
Having established in Proposition \ref{p:tauHi} that the walker hits $K$, we now show that it remains there for a considerable amount of time.
\begin{lemma}
\label{l:NKt}
Let $t\coloneqq C_1 L/\eps^4$ for some $C_1$ sufficiently large. For every $x\in[n]$ let $N(x)$ be the amount of time a walker starting from $x$ spends in $K$ up to time $t$. Then
\[
\Pr\left[\exists x\in[n]\textrm{ s.t. }N(x)<\tfrac 23 t\right]\to 0
\]
as $n\to\infty$, uniformly in $\eps>u^{-1/3}$ (but not necessarily in $C_1$).
\end{lemma}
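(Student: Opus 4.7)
The plan is to combine Proposition~\ref{p:tauHi} (to handle the initial entry into $K$) with a separate estimate showing that once a walker is in $K$ it rarely leaves, because $K$ dominates $G$. First I would take $C_1 \ge 100 C$ so that $t_0 \coloneqq CL/\eps^4 \le t/100$. By Proposition~\ref{p:tauHi}, on an event $\cE$ of probability $1-o(1)$, every particle has hit $K$ by time $t_0$, and therefore on $\cE$ the time a walker spends in the gadget before its first visit to $K$ is at most $t/100$.

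Let $\partial K \coloneqq K \cap \bigcup_i H_i$ denote the $K$-vertices with a gadget neighbor; by construction $|\partial K| \le |\bigcup_i H_i| \lesssim n^{1/4}$, while $|K| = n - o(n)$, so $\pi(\partial K) \asymp n^{-3/4}$ and $\pi(\bigcup_i H_i \setminus K) \asymp n^{-7/4}$. Once a walker lives in $K$, reversibility gives $\Pr_x(Y_s \in \partial K) \le \pi(\partial K)/\pi(x) \lesssim n^{-3/4}$ uniformly in $s \ge 0$ and $x \in K$. Since a walker in $K$ jumps to the gadget only from $\partial K$ and only at rate $1$ per gadget-edge, the per-walker excursion-counting process has predictable compensator $\int_0^t \mathbf{1}\{Y_s \in \partial K\}\,ds$ of expectation $\lesssim t\cdot n^{-3/4}$. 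A standard Poisson-tail argument then yields that for a suitable constant $k_0 = k_0(C_1)$ the probability that a single walker has more than $k_0$ excursions in $[0,t]$ is $o(1/n^2)$.

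Each excursion starts at a vertex $w$ in the gadget adjacent to some $v \in \partial K$, i.e.\ one step into some stretched path in some $H_i$ whose corresponding leaf lies in $K$. Iterating Proposition~\ref{p:tauHi} (or combining it with Lemma~\ref{l:hit2-h} to control how deep into the tree the walker can wander) shows that $\Pr_w[\tau > t_0]$ is small enough that, with probability $1 - o(1/n)$ per walker, all at most $k_0$ excursions last at most $t_0$. On the intersection of the events above, the total gadget time of a walker is at most $t/100 + k_0 \cdot t_0$, which is at most $t/3$ once $C_1 \ge 300 k_0 C$, so $N(x) \ge 2t/3$. Union bounding over the $n$ particles finishes the proof.

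The main obstacle is the single-excursion tail bound. Proposition~\ref{p:tauHi} is stated as a sum over vertices, and the per-vertex worst case $\Pr_v[T_K > t_0] \lesssim |H_i|^{-c\eps^2}$ is only $e^{-c(\log\log n)^{1/3}}$ at the smallest admissible $\eps = u^{-1/3}$; this is not polynomially small in $n$ and so does not close a naive union bound. Getting around this requires using that excursion entry points $w$ sit at distance one from $K$: by Lemma~\ref{l:hit2-h} the probability that the walker climbs $h$ levels up into the stretched tree from such a $w$ is at most $2^{-h}$, so excursions are essentially confined to a small near-$K$ region whose restricted spectral gap is much larger than that of the full gadget, yielding a considerably sharper tail on $\tau$ than the worst-case bound coming from Proposition~\ref{p:tauHi} alone.
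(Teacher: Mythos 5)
Your high-level plan (initial entry into $K$, then control the number and length of subsequent gadget excursions) is the right one, and the paper's proof does follow that outline, but two of your steps have real problems and you have correctly identified the second one yourself.

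First, a numerical error in the "few excursions" step. Reversibility gives $P_s(x,y)\le\pi(y)/\pi(x)$, so $\Pr_x(Y_s\in\partial K)\le\pi(\partial K)/\pi(x)$; but for $x\in K$ we have $\pi(x)\asymp n^{-1}$, so $\pi(\partial K)/\pi(x)\asymp n^{-3/4}/n^{-1}=n^{1/4}$, not $n^{-3/4}$. The bound exceeds $1$ and is useless as stated. The paper controls the number of gadget excursions by a purely combinatorial jump count instead: it bounds the total number of jumps up to time $t$ by a Poisson tail, then the number of those that land in $\partial K$ by a $\mathrm{Bin}\big(2t|K|,|\partial K|/(|K|-1)\big)$ domination (so at most $\approx 4t|\partial K|$ visits to $\partial K$), then the number of visits to $\partial K$ that are followed by a jump into the gadget by another binomial argument with success probability $\asymp 1/|K|$; this gives that a.a.s.\ every particle makes at most \emph{one} excursion out of $K$ after its first entry. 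This circumvents any need for an a priori occupation bound on $\partial K$.

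Second, and more importantly, the single-excursion tail you flag is indeed the crux and your proposed workaround is not worked out. Proposition~\ref{p:tauHi} only gives $\sum_{v\in H_i}\Pr_v[T_K>CL/\eps^4]\lesssim|H_i|^{-c\eps^2}$, and at $\eps\asymp u^{-1/3}$ the per-vertex bound is only $e^{-c(\log\log n)^{1/3}}$, which cannot close a union bound over $n$ particles, $k_0$ excursions each, and random excursion start times. The Lemma~\ref{l:hit2-h} idea that excursions entering one step from $\partial K$ do not climb far is plausible but is only a sketch; Lemma~\ref{l:hit2-h} bounds the probability of reaching the root, not the excursion duration, and converting a confinement statement into a duration tail still requires a spectral estimate you have not set up. The paper avoids this entirely with a covering argument: it applies Proposition~\ref{p:tauHi} at the deterministic times $s=0,q,2q,\dots,q(\lfloor t/q\rfloor-1)$ (a constant number of times, $\lfloor C_1/C_2\rfloor$), using that at each of these fixed times the $n$ particles occupy distinct sites, so the left-hand side of \eqref{e:escape4b} still controls a union over them. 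On the intersection of these $\lfloor C_1/C_2\rfloor$ events, every particle that is outside $K$ at a time $jq$ is back in $K$ by time $(j+1)q$, hence \emph{every} excursion has length at most $2q$ with no per-excursion union bound needed and no stopping-time issue. Combined with "at most one excursion after the first entry" this gives total gadget time at most $3q$, and choosing $C_1$ large relative to $C_2$ finishes. If you replace your compensator estimate with the paper's jump-counting bound and your per-excursion tail with this covering argument, the rest of your outline goes through.
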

\begin{proof}
Let $q=C_2L/\eps^4$ where $C_2$ is the constant from Proposition \ref{p:tauHi}, denoted there by $C$. Apply Proposition \ref{p:tauHi} after some arbitrary time $s$. We get that during the interval $[s,s+q]$, the probability that all particles in $[n]\setminus K$ hit $K$ is at least
\[
  1-C\sum_{i=1}^u \frac{1}{|H_i|^{c\eps^2}}>1-C2^{-u/4}.
\]
Hence for any fixed value of $C_1$, we can apply this for $s=0,q,2q,\dotsc,q(\lfloor t/q\rfloor-1)$ (just a constant number of times, in fact $\lfloor C_1/C_2\rfloor$) and get that with probability going to 1, all events happen simultaneously. In other words, no particle spent more than $2q$ consecutive time units in any visit of $[n]\setminus K$.

Let us now bound the number of possible visits. We will show that a.a.s.\ as $n\to\infty$ no particle makes more than one visit to $[n] \setminus K$ by time $t$ after reaching $K$ for the first time.  For this purpose denote  $\partial K$ to be all points of $K$ with a neighbour in  $[n]\setminus K$ (namely, leaves of  $H_i$ which are not in $B_i$ for $i<u$ and all leaves of $H_u$).

Suppose a particle is at time $0$ at some $x\in K$. Let us first bound the number of jumps it does up to time $t$. Since the degrees of our graph are all bounded by $|K|$, this number is stochastically dominated by an appropriate Poisson variable, and in particular the probability that the particle performed more than $2t|K|$ jumps is $o(1/n)$. Adding the restriction that the jump would be to a vertex of $K$ only reduces the number further, so we get the same bound for the number of jumps to vertices of $K$.

Among the first  $2t|K|$ jumps to vertices of $K$, the number of jumps to $\partial K$ is stochastically dominated by  $\mathrm{Bin}(2t|K|,\frac{|\partial K|}{|K|-1}) $. Hence the probability that more than $4t|\partial K|$ of them are to $\partial K$ is $o(1/n)$ (where we used that $|\partial K|  \gtrsim n^c $, which follows from our choice of $u$).

Examine now the first  $4t|\partial K|+2$ visits to $\partial K$ (not necessarily up to time $t$, all of them). The probability that at least two of the following jumps were away from $K$ is at most $(4t|\partial K|+2)^{2}/|K|^{2}=o(1/n)$, where we used the fact that by   \eqref{eq:gadgetsmall} $|\partial K|\le n^{1/4}$ and $|K| \ge n-n^{1/4}$, as well as $t \lesssim \log^2 n$ (recall that $L\asymp\log n$ and $\eps>u^{-1/3}\asymp(\log\log n)^{-1/3}$). In the case that indeed no more than 1 of these jumps went to $[n]\setminus K$ we get that the first $4t|\partial K|+2$ visits to $\partial K$ include all the visits to $\partial K$ up to time $t$: no more than $4t|\partial K|$ visits from $K$ and no more than 2 visits from $[n]\setminus K$ (the first hitting of $K$ and the first return to $K$).

Combining everything together, we see that after first reaching $K$ (which a.a.s.\ all particles do by time $q$) a.a.s.\ all particles leave $K$ at most once by time $t$ and during such excursion they each spend at most $2q$ time units away from $K$. Taking $C_1$ to be large enough in terms of $C_2$ concludes the proof.   
%
\end{proof}
\subsection{The coupling}
\label{s:coupling}
Denote the transposition $(x,y)$ by $\tau_{xy}$.
 Consider two initial configurations $\sigma$ and $\sigma'$ of the interchange process. We now define a coupling $((\sigma_t)_{t \ge 0}, (\sigma'_t)_{t \ge 0})$ of the interchange processes starting from these initial states. We make the edges ring at rate 2, but when an edge rings, it is ignored with probability $1/2$. We use the same clocks for both systems. If at time $t$ an edge $e=xy$ rings and $\sigma_{t-}(x)=\sigma'_{t-}(y)$ (where $\sigma_{t-}(x)\coloneqq \lim_{\delta \to 0^+  }\sigma_{t-\delta}(x)$, as usual) or  $\sigma_{t-}(y)=\sigma'_{t-}(x)$ then with probability 1/2 we set $\sigma_t=\sigma_{t-} \circ  \tau_{e}$ and $\sigma'_{t}=\sigma'_{t-}$ and with probability $1/2$ we set  $\sigma_t=\sigma_{t-} $ and $\sigma'_{t}=\sigma'_{t-} \circ \tau_e$ (either way, the number of disagreements decreases). If  $\sigma_{t-}(x) \neq \sigma'_{t-}(y)$ and  $\sigma_{t-}(y)\neq \sigma'_{t-}(x)$ then with probability 1/2 we set  $\sigma_t=\sigma_{t-} \circ  \tau_{e}$ and  $\sigma_t'=\sigma_{t-} ' \circ \tau_{e}$   and with probability 1/2 we set  $\sigma_t=\sigma_{t-}  $ and  $\sigma_t'=\sigma_{t-}  '$.
 
 We see that for all $i$ once the particle labeled $i$ is coupled in the two systems, it remains coupled. That is, if $\sigma_{t}^{-1}(i)=(\sigma'_{t})^{-1}(i)$ then for $t'>t$ we also have
 $\sigma_{t'}^{-1}(i)=(\sigma'_{t'})^{-1}(i)$. Whenever the position of particle $i$ is adjacent in one system is adjacent to the current position of particle $i$ in the other system (i.e.\ $\sigma_{t}^{-1}(i)(\sigma'_{t})^{-1}(i) \in E_n$) the infinitesimal rate in which they are coupled is $2$.

\begin{lemma}\label{l:upperbound}There exists a $C$ such that
\[
  \mix(\Cay(\sym_n,S_n(\eps,u_n)))\le \frac{Cu_n4^{u_n}}{\eps^4},
\]
under our usual assumption that $\eps >u_n^{-1/3}$. 
  \end{lemma}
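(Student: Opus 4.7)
My plan is to use the coupling described in \S\ref{s:coupling}: starting from two arbitrary initial configurations $\sigma_{0}$ and $\sigma_{0}'$, I will show that under that coupling all particles are simultaneously coupled by time $t\coloneqq Cu4^{u}/\eps^{4}$ with probability at least $3/4$, provided $C$ is large enough. By the standard coupling inequality this bounds the continuous-time TV mixing time from every initial state, which is all that is needed (the passage from continuous to discrete time is the routine conversion mentioned in the introduction).

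The first ingredient is Lemma \ref{l:NKt}, applied marginally to each of $\sigma$ and $\sigma'$ (each is, on its own, a standard interchange process). This gives, with probability $1-o(1)$, that every particle in each system spends at least $\tfrac{2t}{3}$ time units in the clique $K$ during $[0,t]$; an elementary inclusion-exclusion in $[0,t]$ then yields, for every label $i$, a total time at least $t/3$ during which the two positions $X^{(i)}_{s}\coloneqq \sigma_{s}^{-1}(i)$ and $Y^{(i)}_{s}\coloneqq (\sigma_{s}')^{-1}(i)$ lie simultaneously inside $K$. The second ingredient is that whenever $X^{(i)}_{s}$ and $Y^{(i)}_{s}$ are both in $K$ and distinct, the edge $X^{(i)}_{s}Y^{(i)}_{s}$ is an edge of $G$ (because $K$ is a clique), and by the construction of the coupling each (non-censored) ring of its Poisson clock triggers the disagreement rule with $x=X^{(i)}_{s}$, $y=Y^{(i)}_{s}$, coupling particle $i$ on the spot. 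Thus the intensity, with respect to the joint filtration, of the counting process ``coupling events for particle $i$'' dominates a positive constant times $\mathbf{1}\{X^{(i)}_{s},Y^{(i)}_{s}\in K,\ \tau_{i}>s\}$, where $\tau_{i}$ is the coupling time.

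Combining the two ingredients, a routine hazard-rate estimate gives, on the good event $\mathcal{G}$ from Lemma \ref{l:NKt}, that $\Pr(\tau_{i}>t\mid\mathcal{G})\le e^{-ct/3}$ for some absolute $c>0$. A union bound over the $n$ labels then gives $\Pr(\exists\, i:\tau_{i}>t)\le o(1)+n e^{-ct/3}$; since $u4^{u}\asymp\log n$ while $t\ge Cu4^{u}$ (recall $\eps<\tfrac12$), both terms are $o(1)$ once $C$ is taken large enough. The main delicate point is the hazard-rate estimate itself, because the intensity depends on the joint state of both coupled chains and on the event $\{\tau_{i}>s\}$; however, since particle coupling is absorbing and the intensity lower bound holds pathwise, a standard compensator/optional-stopping argument makes this rigorous and yields the bound $\mix\le Cu4^{u}/\eps^{4}$ claimed in the lemma.
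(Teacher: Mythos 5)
Your proof is correct and is essentially the paper's own argument: both rest on Lemma~\ref{l:NKt} to guarantee that, a.a.s., each labeled particle spends at least a third of the interval $[0,t]$ with its two copies simultaneously in the clique $K$, then use that the clique edge $X^{(i)}_sY^{(i)}_s$ couples particle $i$ at positive rate during this time, giving an exponential tail for the coupling time of each particle, and finish with a union bound over the $n$ labels using $u4^u\asymp\log n$. The only cosmetic difference is that you spell out the hazard-rate/compensator justification for the ``standard argument'' the paper invokes; the structure and ingredients are the same.
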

\begin{proof}Lemma \ref{l:NKt} shows that a.a.s.\ indeed all particles in one system spend at least $\frac{2}{3}$ of the time in $K$ by time $C_1L/\eps^4$ for any $C_1$ sufficiently large. By a union bound this applies to both systems in the above coupling. On this event (occurring for both systems),
  for each $i$ the particle labeled $i$ has to spend at least  $1/3$ of the time by time $C_1L/\eps^4$ in $K$ simultaneously in both systems. Since the particle gets coupled with rate 2 during these times, a standard argument shows that 
the conditional probability of particle $i$ not getting coupled is at most
\begin{equation}
\label{eq:rate2bound}
\exp(-cC_1 L/\eps^4)\le\exp(-cC_1L)
\stackrel{(*)}{\le}n^{-cC_1},
\end{equation}
where the inequality marked by $(*)$ follows since $L=u4^u\asymp\log n$ (see just above \eqref{eq:gadgetsmall}). If $C_1$ is sufficiently large, this will be $\ll 1/n$ and we may apply a union bound and get that a.a.s.\ all particles are coupled by time $C_1L/\eps^4$. As the initial states $\sigma$ and $\sigma'$ are arbitrary, this implies that the mixing time is at most $C_1L/\eps^4$ (see e.g.\ \cite[Theorem 5.4]{cf:LPW}).
\end{proof}
\subsection{The perturbation}
\label{s:pertoverview}
In this section we analyse the perturbed versions of $S_n$, lower bound their mixing time, and thus conclude the proofs of Theorems \ref{thm:short} and \ref{thm:weighted}. The following convention will be useful here and in other places in the paper. Thus we make special note of it
\begin{definition}\label{def:left}We call an edge of $H_i$ that belongs to a path that is a stretching of a left edge (of $H_i'$) a `left edge'. Similarly for right edges.
\end{definition}
Do not be confused with the definition of $g$. It is still the case that $g$ counts left edges before stretching, not all left edges of $H_i$.
\begin{proof}[Proof of Theorem \ref{thm:weighted}]Recall that we are given a function $1\ll f(n)\le\log\log\log n$ and we need to construct generators $S_n$ and weights $W_n=(w_n(s))_{s\in S}$ satisfying $1\le w_n(s)\le 1+(f(n!)/\log\log n)^{1/4}$ such that
\[
\mix(\Cay(\mathfrak{S}_{n},S_{n},W_n)) \gtrsim \mix(\Cay(\mathfrak{S}_{n},S_{n}))f(n!).
\]
Define $\eps\coloneqq   c_1(f(n!)/\log\log n)^{1/4}$ where $c_1$ is a universal positive constant that will be fixed soon (but let us already require $c_1<\frac14$).  The requirement $\eps>u^{-1/3}$ will be satisfied for $n$ sufficiently large. 
We use the set $S_n(u,\eps)$ defined above with $u=u_n$ and this $\eps$ (we remind that $u_n\asymp \log\log n$). 
  
  Denote, for any $\delta>0$, $W(\delta,n)=(w(s))_{s\in S_n}$ with 
\[
  w(s)\coloneqq \begin{cases}
    1+\delta & s\textrm{ is a left edge}\\
    1&\textrm{otherwise}
  \end{cases}
\]
(`otherwise' referring to both right edges and to edges of $K$). We will take $\delta=\eps/c_1$ in what follows. The notation $W(\delta,n)$ will be reused below in the proof of Theorem \ref{thm:short}, but there we will take $\delta=3$, so let us proceed under the assumption $\delta\le 3$, which holds under the definitions of $\delta$ and $\eps$ above too.

Recall the notation $g(v)$ for the number of left children in a path from the root to $v$ (before stretching). Examine first an infinite binary tree where each left child has weight $1+\delta$ for some $\delta>0$, and each right child has weight 1 (denote this object by $\cT_\delta$). Let $Y_k$ be the \emph{last} vertex in the $k^{\textrm{th}}$ level visited by random walk on it. By \cite[Fact 4.1 (2 a)]{HP} (proved in the appendix of \cite{HP}), $g(Y_k)$ has the same distribution as the sum of $k$ independent $\{0,1\}$-variables taking the value 1 with probability
\[
\frac{\sqrt{1+\delta}}{1+\sqrt{1+\delta}}=\frac{1}{2}+\delta/4+O(\delta^2).
\]
This fact holds also for random walk on $\cT_\delta$ started from either child of the root and conditioned not to return to the root. The proof in \cite{HP} applies to this case verbatim.
Now, if $c_1$ is sufficiently small then
\[
\eta\coloneqq \frac{\sqrt{1+\eps/c_1}}{1+\sqrt{1+\eps/c_1}}>\frac12+3\eps.
\]
(recall that $\delta=\eps/c_1$ is bounded above by $3$). Fix $c_1$ to satisfy this property.

Still on the infinite tree $\cT_{\delta}$, denote by $Y_k^*$ the vertex where the walker is at on the \emph{first} time it hits level $k$, in other words, the hitting point. It is straightforward to see that $\Pr(|g(Y_k^*)-g(Y_k)|>\lambda)\le 2e^{-c\lambda}$ for every $\lambda$, where the (non-negative) constant $c$ is   independent of $\eps$.

Information on $Y_k^*$ can already be translated to our graphs $H_i$, because random walk on $H_i$, when considered only at times when it reaches a vertex before stretching, is identical to random walk on a piece of $\cT_{\eps/c_1}$ (say, by Lemma \ref{lem:conductances}). We get that a random walk starting from a root of $H_i$ and conditioned not to go to $H_{i-1}$ before leaving $H_i$ (for $i=1$, an unconditioned walker) has, when it exits $H_i$ that $g$ is distributed like $\textrm{Bin}(s_i,\eta)$ plus a quantity with a uniform exponential tail (uniform in both $i$ and the value attained by the $\textrm{Bin}(s_i,\eta)$ random variable). 

A similar argument shows  that, now on our graphs $H_i$, if $Z_i$ is the first vertex the walker is in among the roots of $H_{i+1}$ (which of course is also a leaf of $H_i$) and $Z_i^*$ is the last vertex the walker is in $H_i$, (say, before hitting  the leaves of $H_{i+1}$ or $K$ for the first time),
then $|g(Z_i)-g(Z_i^*)|$ is bounded with an exponential tail (uniformly in $i$). 

We may now finish the proof of the theorem. Indeed, let $X$ be the particle that was at time 0 at the root of $H_1$. Let $T$ be the time $X$ hits the leaves of $H_1$. We see that, if $\lambda>0$ is some sufficiently small constant then
\begin{align*}
  \Pr(T\ge \lambda u4^u)&>1-Ce^{-cs_1}=1-Ce^{-cu}\\
  \Pr(g(X_T)>(\tfrac12 +2\eps)s_1)&>1-Ce^{-cs_1\eps^2}>1-Ce^{-cu^{1/2}}
\end{align*}
where the last inequality is due to $\eps\gtrsim(\log\log n)^{-1/4}$. 
In particular, with the same probability $X_T$ is in $B_1$. The same $X$ still has that $g>(\frac12+\eps)s_1$ when leaving $H_1$ (again with probability $>1-Ce^{-cs_1\eps^2}$), and then hits the leaves of $H_2$ after another at least $\lambda u4^u$ time units, and hits $B_2$, and so on. We get that at time $\lambda u^24^u$ this particle is still inside the gadget, with probability at least $1-Cue^{-cu^{1/2}}$. This of course means the walk on $\sym_n$ is not yet mixed. Hence
\[
\mix(\Cay(\sym_n,S_n,W_n))\gtrsim u^24^u.
\]
With Lemma \ref{l:upperbound} 
we get that
\[
\mix(\Cay(\sym_n,S_n,W_n))\gtrsim u^24^u\gtrsim \mix(\Cay(\sym_n,S_n)) \cdot u\eps^4\asymp
\mix(\Cay(\sym_n,S_n))f(n!)
\]
as claimed (in the last `$\asymp$' we used $u \asymp \log \log n$). This concludes the proof.
\end{proof}
\begin{proof}[Proof of Theorem \ref{thm:short}]
  Recall from the proof sketch \S\ref{s:sketch} that $S_n'$ is created by adding to each path of $S_n$ that came from stretching a left edge, edges between even vertices (initially at distance two from one another). The parallel-serial laws show that the resistance of a path of length $2N$ to which such edges have been added is $\frac23 N$. Examining a walker only at times where it is in vertices that were not added in the stretching process, we see that its walk is exactly identical to a walk on $\Cay(\sym_n,S_n,W(3,n))$, where $W(3,n)$ is from the previous proof. 
Hence choosing $\eps=3c_1$ we get, as in the previous proof, $\mix(\Cay(\sym_n,S_n'))\gtrsim u^24^u$.

  This almost finishes the proof of Theorem \ref{thm:short}. The only remaining issue to address is that Theorem \ref{thm:short} is formulated in discrete time, while we worked all along in continuous time. This is not a problem. Indeed, if $P$ is a transition matrix and $I$ is the identity matrix, then the total variation mixing time $\mix^{\delta \, \mathrm{lazy}}$ of the $\delta$-lazy chain with transition matrix $\delta I+(1-\delta) P$ and that of the continuous-time chain with generator $\cL=P-I$, denoted by $\mix^{\mathrm{ct}} $, satisfy 
\begin{equation}
\label{e:lazyvsctstime}
\frac{\delta}{C(1-\delta) } (\mix^{\delta \, \mathrm{lazy}}-c_{\delta})\le \mix^{\mathrm{ct}} \le \frac{C}{(1-\delta) }(\mix^{\delta \, \mathrm{lazy}}+c_{\delta})
\end{equation}
for an absolute constant $C>0$ and a constant $c_{\delta}>0$, independent of the Markov chain. The case $\delta=1/2$ follows directly from 
\cite[Theorem 20.3]{cf:LPW} and the argument extends to all $\delta \in (0,1) $.   For much finer relations between the two mixing times in the reversible setup see \cite{tave}, \cite{cf:Basu} and \cite{CSC}.

In our case, we estimated the continuous time mixing time with the rates equal to 1, while the generator $\cL$ has rates $1/|S_n|$ or $1/|S_n'|$, as the case may be. Multiplying all the rates by a constant changes the mixing time by the same constant, so we get
\begin{align*}
\mix(\Cay(\sym_n,S_n))
  &\stackrel{\mathclap{\textrm{(\ref{e:lazyvsctstime})}}}{\lesssim}
  |S_n|\mix^{\textrm{ct}}(\Cay(\sym_n,S_n))\lesssim |S_n|u4^u\asymp n^2\log n\\
\mix(\Cay(\sym_n,S_n'))
  &\stackrel{\mathclap{\textrm{(\ref{e:lazyvsctstime})}}}{\gtrsim}
  |S_n'|\mix^{\textrm{ct}}(\Cay(\sym_n,S_n,W(3,n)))\gtrsim |S_n'|u^24^u\asymp n^2\log n\log\log n.
\end{align*}
The theorem is thus proved.
\end{proof}

\section{Proof of Theorem \ref{thm: 2}}
\label{s:proofthm2}

Recall that we wish to construct a sequence of graphs $G_n$ with bounded degrees and weights with $1\le w_n(e)\le 1+o(1)$ such that the mixing time of $G_n$ is significantly smaller than the mixing time of the weighted version.

As a building block in our construction, we will need the auxiliary graph described in the following lemma, whose proof is deferred to \S\ref{s:auxexpander}.
\begin{lemma}
\label{lem:auxexpander}
There exists an absolute constant $\mu >0 $ such that for every $m$ there exists a graph $H$ of maximal degree $6$ with $|H| \asymp 2^{10m}$ containing two disjoint sets of vertices $B$ and $W$ of sizes $|B|=2^m$ and $|W|= 2^{10m}$  such that lazy simple random walk on $H$ satisfies that 
\begin{alignat}{2}
\label{e:uniformhitting}
\Pr_b[T_{B \setminus \{b\}} < T_{W} ]&\lesssim 2^{-4m}&\qquad&\forall b\in B\\
\label{e:uniformhitting2}
\ex_h[T_{B \setminus \{h\}}] &\asymp |H|/2^m  &&\forall h\in H  \\
\label{e:uniformhitting3}
\Pr_h[T_{B \setminus \{h\}}>|H|/2^m] &\gtrsim 1  &&\forall h\in  H\\
\label{e:atleastmu}
\Pr_b[T_W<T_b\,|\,T_W<T_{B\setminus \{b\}}] &\ge\mu &&\forall b\in B.
\end{alignat}
Moreover, the last probability is the same for all  $b \in B$. 
Lastly,    for all  $w\in W$  starting from $w$  the hitting distribution of $B$ uniform. 
\end{lemma}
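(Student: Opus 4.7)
The plan is to build $H$ by attaching $|B|=2^m$ pendant vertices to a bounded-degree expander. Concretely, let $W$ be a bounded-degree expander on $2^{10m}$ vertices with a spectral gap bounded below by an absolute constant (e.g.\ a Ramanujan graph, or a.a.s.\ a random $3$-regular graph), choose $2^m$ distinct attachment points $v_1,\ldots,v_{2^m}\in W$, and for each $i$ add a new vertex $b_i$ joined to $v_i$ by a single edge. Set $B=\{b_i\}$ and $H=W\cup B$; then $\Delta(H)\le \Delta(W)+1\le 5$ and $|H|=2^{10m}+2^m\asymp 2^{10m}$.

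Properties \eqref{e:uniformhitting} and \eqref{e:atleastmu} are then immediate from this construction. The only neighbour of $b_i$ is $v_i\in W$, so starting from $b=b_i$ one has $T_W=1$ deterministically, while $T_{B\setminus\{b\}}\ge 2$ and $T_b^+\ge 2$. Hence $\Pr_b[T_{B\setminus\{b\}}<T_W]=0$, and the conditional probability in \eqref{e:atleastmu} equals $1$ for every $b\in B$, which also takes care of the ``common value'' clause.

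Properties \eqref{e:uniformhitting2} and \eqref{e:uniformhitting3} will follow from \eqref{eq: restrictedcheeger} combined with the expansion of $W$. Applying the restricted Cheeger inequality to $A=H\setminus B$, and using that $\pi(B)\asymp 2^{-9m}$ together with bounded-degree expansion of $W$, one obtains $\Phi(A)\asymp 2^{-9m}$ and hence $\lambda(A)\asymp 2^{-9m}$; the exponential-tail estimate \eqref{eq: exitprob} then yields both $\ex_h[T_{B\setminus\{h\}}]\asymp |H|/2^m=2^{9m}$ and $\Pr_h[T_{B\setminus\{h\}}>|H|/2^m]\gtrsim 1$ uniformly in the starting vertex $h\in H$ (and separately handling $h\in B$ by taking one step to the neighbour in $W$).

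The main obstacle is the exact uniformity of the hitting distribution of $B$ from each $v\in W$: a generic placement of the $v_i$'s only yields \emph{approximately} uniform hitting (with error exponentially small in the spectral gap times the mean hitting time, since the mixing time of $W$ is $O(m)$ while the hitting time of $B$ is $\Theta(2^{9m})$), but the lemma demands exact uniformity. To force this, the plan is to build a symmetry into the construction: take $W$ to be vertex-transitive (say a bounded-degree Cayley graph $\mathrm{Cay}(G,S)$ with $|G|=2^{10m}$ and $S$ yielding uniform spectral gap) and choose $\{v_1,\ldots,v_{2^m}\}$ as an orbit of a group $\Gamma\le\mathrm{Aut}(W)$ that permutes the $v_i$'s transitively while preserving the remaining vertices of $W$ in a way compatible with $\Gamma$ acting trivially on the hitting distribution. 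The extension of $\Gamma$ to $\mathrm{Aut}(H)$ (permuting $B$ transitively) then forces the hitting distribution of $B$ from any $v\in W$ to be $\Gamma$-invariant on $B$ and hence uniform. Producing a bounded-degree expander $W$ with the required extra symmetry is the delicate technical point; the rest of the lemma is straightforward expander theory.
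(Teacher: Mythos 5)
Your construction is genuinely different from the paper's, and it fails at the last clause: the exact uniformity of the hitting distribution of $B$ from each $v\in W$. This is not a technicality you can patch with a clever choice of expander or attachment points; it is impossible for any graph in which $b_i$ is a pendant hanging off $v_i\in W$.

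Here is the concrete obstruction. Write $p:=P(v_i,b_i)>0$. Suppose for contradiction that the hitting distribution of $B$ from every $w\in W$ is uniform. Conditioning on the first step from $v_i$ gives
\[
\Pr_{v_i}\bigl[X_{T_B}=b_i\bigr]\;=\;p+(1-p)\cdot 2^{-m}\;>\;2^{-m},
\]
contradicting uniformity at $v_i$. So the required property fails at every attachment point. Your proposed symmetry fix runs into the same wall: any automorphism of $H$ that fixes $v_i$ must permute the neighbours of $v_i$, and $b_i$ is the unique degree-one neighbour of $v_i$, so it is fixed. Hence no automorphism fixes $v_i$ and carries $b_i$ to some $b_j$ with $j\ne i$, and there is no hope of using symmetry to average away the bias toward $b_i$ while keeping $v_i\in W$.

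The paper's construction is designed precisely to avoid this. It keeps $B$ at large distance from $W$: each $b\in B$ is the root of a binary tree of depth $4m$, the $2^{5m}$ leaves of these trees are wired into a hierarchical ``bit-forgetting'' gadget (the swamp), and $W$ is the top level of that gadget. The resulting graph carries a group of automorphisms isomorphic to $(\mathbb{Z}_2)^{5m}$ that fixes $W$ pointwise and acts transitively on $B$, which gives exact uniformity. The same geometry produces \eqref{e:uniformhitting} and \eqref{e:atleastmu} by a drift computation on the tree (there, unlike in your construction, these inequalities actually have content), and \eqref{e:uniformhitting2}--\eqref{e:uniformhitting3} by a commute-time/eigenvalue estimate. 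Your handling of \eqref{e:uniformhitting}--\eqref{e:atleastmu} is fine as far as it goes, but the uniformity clause is the crux of the lemma (the downstream argument needs the $Z_i$ to be exactly i.i.d.\ uniform on $B$), and your construction provably cannot deliver it.
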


As usual, $T_W$, $T_B$, etc.\ are the hitting times of $B$, $W$, etc.

\subsection{The construction}
\label{s:bdd construction}
Let $G=G_n((\log\log n)^{-1/8})$ i.e.\ the graph from the construction of Theorem \ref{thm:weighted} with the parameter $\eps$ from the construction taken to be $(\log\log n)^{-1/8}$ ($G$ is the graph on which the interchange process is performed, so $E(G)$ is a set of transpositions of $\sym_n$). Let $m$ satisfy that $2^{m-1}< |E(G)|\le 2^{m}$. Let $H$ be the graph from Lemma \ref{lem:auxexpander} with this $m$ (so $|H| \asymp 2^{10m} \asymp n^{20}$). Let $A\subseteq B$ ($B$ from the statement of Lemma \ref{lem:auxexpander}) be some arbitrary set of size $|E(G)|$ and let $\tau:A\to E(G)$ be some arbitrary bijection. 

We now construct our graph, which we denote by
$L$. We take the vertex set to be $H\times \sym_n$. We define the edges implicitly by describing the transition probabilities of the random walk. Let $\{a,b\}\in  E(H)$. If $a\not\in A$ we set $P\left(\left(a,\sigma \right),\left(b,\sigma\right) \right)=\frac{1}{\deg a}$ for all $\sigma \in \sym_n$, where $\deg a$ is the degree of $a$ in $H$. If $a \in A$ and $b \in   H \setminus A  $  we set $P\left(\left(a,\sigma \right),\left(b,\sigma\right) \right)=\frac{1}{2\deg a}$, while $P\left(\left(a,\sigma \right),\left( a,\sigma \circ \tau_{a} \right) \right)=\frac{1}{2}$
(recall that $\tau_a$ is the transposition corresponding to $a$). \tb{}  No other transitions have positive probability. Below we consider the mixing time of the continuous time version of $P$ or the discrete time mixing time of $\frac 12 (I+P)$. We shall denote either mixing time by $\mix(L)$.

This chain $(X_t,\sigma_t)_{t \ge 0}$ can be described as follows: We have a random walk $(X_t)_{t \ge 0}$ on $H$ and an `interchange process' $\sigma_t$ on $G$ which evolves in slow motion. Whenever the walk $X_t$ on  $H$ is at some vertex $a \in A$, 
it either stays put or makes a random walk step on $H$. If it stays put in $a \in A$ then it also makes one step of the interchange process, updating its state to $\sigma_t \circ \tau_a$. 
\subsection{Analysis of the example}
We first define a sequence of random times. Recall the set $W$ from the construction of $H$ in Lemma \ref{lem:auxexpander}. Let $S_1\coloneqq \inf\{t\ge 0:X_t\in W\}$ and $T_1\coloneqq \inf\{t>S_1:X_t \in B \}$. Inductively, set \[S_{i+1}\coloneqq \inf\{t>T_i:X_t \in W \} \quad \text{and} \quad T_{i+1}\coloneqq \inf\{t>S_i:X_t \in B \}.\] Let $\cJ$ be the event that for all $i \le n^4$  the walk does not visit  $B \setminus \{X_{T_i}\} $ between time $T_i$ and $S_{i+1}$. By \eqref{e:uniformhitting} $\Pr_{y}(\mathcal{J})> 1-O(n^{-4})$ for all $y\in L$. 

Let $Z_i:=X_{T_i}$ and   $\widehat \sigma_{i}:= \sigma_{T_i}$ ($\sigma_{t}$ being the second coordinate of the chain $(X_t,\sigma_t)$). By Lemma \ref{lem:auxexpander} we have that $Z_1,\ldots,Z_{n^4}$ are i.i.d.\ uniform on $B$. 
Under $\cJ$, the behaviour of the permutation $\sigma$ in the time interval $[T_i,T_{i+1})$ is quite simple: if $Z_i\in B\setminus A$ then it does not change at all in this interval, and if $Z_i\in A$ then it is composed with $\tau_{Z_i}$ with probability $\frac 12$ for each time $t\in[T_i,T_{i+1})$ when $X_t=Z_i$. 
This, together with \eqref{e:atleastmu} imply that, still under $\mathcal{J}$, $(\widehat \sigma_i)_{i=1}^{n^4}$ evolves precisely like a lazy version of the discrete-time interchange process on $G$. The laziness has two sources: the probability to hit $B\setminus A$ (which gives laziness $|B\setminus A|/|B|$, which is bounded above by $\frac12$), and an additional laziness coming from the event of applying the transposition $\tau_{Z_i}$ an even number of times between $T_i$ and $S_{i+1}$. We use here the fact that the probability in \eqref{e:atleastmu} is the same for all  $a \in A$. In other words, we have a coupling of $\widehat\sigma_i$ and lazy interchange on $G$ which succeeds (i.e.\ the two processes are the same) with probability $1-O(n^{-4})$.

Let $r$ be the $\frac 14$ total variation mixing time of this lazy discrete-time interchange process on $G$. To estimate $r$, note that by Lemma \ref{l:upperbound}, the mixing time of the interchange process 
is at most $Cu4^u\eps^{-4}\asymp \log n(\log\log n)^{1/2}$ (recall that $ u4^u \asymp \log n$). Using \eqref{e:lazyvsctstime} we may translate this to the mixing time of the lazy discrete-time interchange process and get that $r\lesssim n^2 \log n(\log\log n)^{1/2}$ (recall that $|E(G)| \asymp n^2$). 
In particular, $r<n^4$ for all sufficiently large $n$.

Thus, under $\cJ$ we have that $X_{T_{r+1}}$ has its first coordinate uniform on  $B$ and its second approximately uniform on $\sym_n$ and independent of the first coordinate. Removing the requirement that we are on  $\mathcal{J}$, the distribution of $X_{T_{r+1}}$ is still approximately uniform (in the TV distance) on the same set, simply because $\Pr(\mathcal{J})>1-Cn^{-4}$. 

In the language of \cite{LW98}, $T_{r+1}$ is an approximate forget time. As we  recall below, by combining results from \cite{LW98} and \cite{A82},  this implies that
\begin{equation}
\label{e:mixL}
  \mix(L)\lesssim \max_{y} \ex_y (T_{r+1})
  \stackrel{\textrm{\eqref{e:uniformhitting2}}}{\asymp}
  (r+2)\frac{|H|}{2^m}\lesssim n^{20}\log n\sqrt{\log\log n}
  \end{equation}
(we have $r+2$ rather than $r+1$ in the third expression, to account for the time until the walk hits $B$ for the first time. This is also why we formulated \eqref{e:uniformhitting2} for every $h\in H$ and not just for $b\in B$).

Thus we need only describe briefly the results of \cite{LW98} and \cite{A82}. In \cite{LW98}, the authors define the mixing time differently from us (see the definition of $\cH$ in \cite[\S 2.3]{LW98}). We will adopt their notation and call this quantity $\cH$. (We will not define $\cH$ here as this would take too much space. The reader can find the definition, together with many illuminating examples, in \cite{LW98}). As for the approximate forget time, it is denoted in \cite{LW98} by $\cF_{\underline{\eps}}$ (also in \S 2.3 there). Finally, the result that $\cF_{\underline{\eps}}\asymp \cH$ is a combination of theorems 3.1 and 3.2 in \cite{LW98}.

As for \cite{A82}, it defines $\tau_1$ which is the continuous time mixing time, and $\tau_2$ which is the same as $\cH$, and \cite[Theorem 5]{A82} states that $\tau_1\asymp\tau_2$ (see also \cite{PS} where $\cH$ is denoted by $t_{\mathrm{stop}}$). Thus we get
\[
\mix(L)=\tau_1\asymp \tau_2=\cH \asymp \cF_{\underline{\eps}}\lesssim \max_{y} \ex_y (T_{r+1})
\]
which justifies the first inequality of (\ref{e:mixL}) and finishes the estimate of $\mix(L)$.

\begin{remark}An alternative proof that replaces the results of \cite{LW98} with a coupling argument is a follows. Using the specific construction of the graph $H$, the expectation of the  time required in order to  couple  the $H$ coordinate is at most $\max_{b \in B}\mathbb{E}_{b}[T_W]$ (cf.\ the coupling for lazy simple random walk on a finite $d$-ary tree in \cite[\S5.3.4]{cf:LPW}).  The above analysis allows one to then couple the $\sym_n$ coordinate with the additional amount of time required having expectation at most  $Cn^{20}\log n\sqrt{\log\log n}$.      
\end{remark}

\subsection{The perturbation}
  Recall from the previous section the stopping times $T_i$ and $S_i$, the notation $Z_i=X_{T_i}$, $\widehat \sigma_i:=\sigma_{T_{i}}$ and the event $\cJ$.
For every $a\in A$ such that the edge that corresponds to $\tau_a$ is a left edge (recall Definition \ref{def:left}), we increase the weight of the edges $((a,\sigma),(a,\sigma \circ \tau_a))$ to $1+\theta \eps$ for some $\theta$ sufficiently large, to be fixed later. Here $\eps$ is as in \S\ref{s:bdd construction}, namely $(\log \log n)^{-1/8}$.

To analyse the effect of this perturbation fix  $i<n^4$, assume $Z_i\in A$, and denote $(a,\sigma)\coloneqq (Z_{i},\widehat \sigma_i)$. We need to examine the number of times the walker traversed the edge $((a,\sigma),(a,\sigma\circ\tau_a))$ between $T_i$ and $S_{i+1}$. 
Denote this number by $N$. Clearly, if $N$ is even then $\widehat \sigma_{i+1}= \sigma$ and otherwise it is $\sigma \circ \tau_a$. Let $p_{\mathrm{even}}$ be the probability that $N$ is even. 
Let $q:=\mathbb{P}[X_{T_{i}+1}=a \mid X_{T_{i}}=a]$. 
Let $\beta$ be the probability that after jumping away from $a$ the walk returns to $a$ before hitting $W$. 
By a first step analysis
\[p_{\mathrm{even}}=q(1-p_{\mathrm{even}})+(1-q)\left(1-\beta(1-p_{\mathrm{even}}) \right). \]
Solving yields that
\[p_{\mathrm{even}}=1-\frac{q}{1+q-(1-q)\beta}. \]
Conveniently, the perturbation does not affect $\beta$, it only affects $q$, increasing it from $\frac12$ to $q_0\coloneqq (1+\theta\eps)/(2+\theta\eps)$. Hence
\[\frac{\mathrm{d}}{\mathrm{d}q}p_{\mathrm{even}}=-\frac{1-\beta}{(1+q-(1-q)\beta)^{2}}. \]
The last derivative is negative and is bounded away from 0. 

It 
follows from this that the perturbation decreases the probability $p_{\mathrm{even}}$ by an additive term which is $\Theta(\theta \eps )$.
Thus we see that the effect of this perturbation on the induced random walk on $\Cay(\sym_n,S_n)$ is to increase the probability that left edges are taken by $\Theta(\theta \eps)$. Recall from the proof of Theorem \ref{thm:weighted} the notation $W(\delta,n)$ for the weights on $S_n$ which give added weight $\delta$ to the left edges. Denote
\[
\delta=\frac{q_0}{1+q_0-(1-q)\beta}\left/
\frac{1/2}{3/2-\beta/2}\right.-1=O(\theta\eps).
\]
Again this gives a coupling between $\widehat\sigma_i$ to a random walk on $\Cay(\mathfrak{S}_{n},S_{n},W(\delta,n))$ which succeeds with probabilty $\Pr(\cJ)=1-O(n^{-4})$ (the probability of $\cJ$ is not affected by the perturbation).

The only condition to apply the analysis of Theorem \ref{thm:weighted} is $\eps/c_1\le\delta\le 3$, where $c_1$ is from the proof of Theorem \ref{thm:weighted}. Taking $\theta$ sufficiently large will ensure the condition $\delta\ge \eps/c_1$ while the condition $\delta\le 3$ holds for $n$ sufficiently large. Fix $\theta$ to satisfy this requirement. Thus the analysis of the proof of Theorem \ref{thm:weighted} shows that the particle that was at the root of $H_1$ at time 0 ($H_1$ from the construction of $G$, and unrelated to the $H$ from Lemma \ref{lem:auxexpander}) is still in the gadget after $r'\coloneqq  c n^2u^24^u\asymp n^2\log n\log\log n$  steps of the induced random walk, for $c$ sufficiently small. Since the coupling between $\widehat\sigma_i$ and the interchange process succeeds with high probability, this shows the same behaviour for $\widehat\sigma_i$. 
This of course means that the random walk on $L$ is not mixed. Using \eqref{e:uniformhitting3} we see that with high probability, by time $cr'|H|/2^m$ the induced walk still did not do $r'$ steps, so we get
\begin{align*}
\mix(L,\textrm{perturbed weights})&\gtrsim cr'|H|/n^2 
\asymp n^{20}\log n\log\log n
\end{align*}
proving Theorem \ref{thm: 2}. \qed 

\begin{figure}
\centering{\input{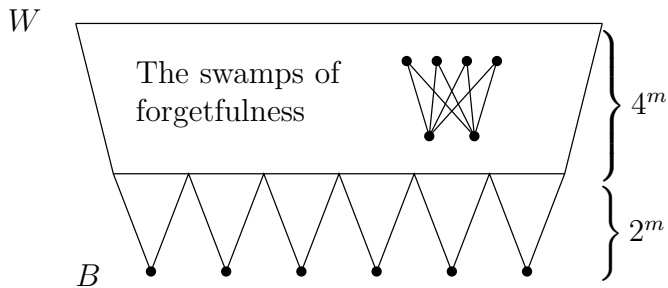}}
\caption{The `clock graph'. The triangles emanating from vertices of $B$ are the trees, the area above them is the swamp.}\label{fig:swamps}
\end{figure}

\subsection{Proof of Lemma \ref{lem:auxexpander}}
\label{s:auxexpander}
Let $s \in \N$. 
For $0\le\ell\le s$ we denote $\mathcal{A}_{\ell}\coloneqq   \{u_{i_{1},\ldots,i_{\ell}}^k:i_1,\ldots,i_{\ell} \in [4],k \in [2^{s-\ell}] \}$ (for $\ell=0$ this simply means $\mathcal{A}_0=[2^s]$). 
For all $\ell \le s-1$, $i_1,\ldots,i_{\ell} \in [4]$ and $k \in [2^{s-\ell-1}] $   we connect both $u_{i_{1},\ldots,i_{\ell}}^k$ and $u_{i_{1},\ldots,i_{\ell}}^{k+2^{s-\ell-1}}$ to $u_{i_{1},\ldots,i_{\ell},1}^k$, $u_{i_{1},\ldots,i_{\ell},2}^k$, $u_{i_{1},\ldots,i_{\ell},3}^k $ and $u_{i_{1},\ldots,i_{\ell},4}^k$.

 We start the construction of $G$ with $2^m$ binary trees of depth $4m$. The set $B$ is taken to be the collection of the $2^m$ roots. 
 We label the union of the leaves of these trees by $[2^{5m}]$ so that each tree occupies an interval of values and identify it with $\mathcal{A}_0$ 
(with $s=5m$, of course). Denote $W\coloneqq  \mathcal{A}_{5m}$. 
 This terminates the construction the graph from the statement of Lemma \ref{lem:auxexpander}, denoted by  $G$. The construction is depicted in Figure \ref{fig:swamps} with the trees depicted as triangles. The area above them, nicknamed `the swamps of forgetfulness', is composed of elements as in Figure \ref{fig:swamps}, namely, two vertices below, 4 vertices above all edges between them. These elements have the property that the particle forgets one bit whenever it traverses such an element, be it in the up or down direction. When the particle has traversed the swamp fully, it has completely forgotten its starting point.  This construction is borrowed from \cite[\S6.2]{swamps}.    

All of \eqref{e:uniformhitting}-\eqref{e:atleastmu} follow because the distance from the roots behaves like random walk on $\N$ with a drift. Equation \eqref{e:uniformhitting} follows because this requires to get to distance $4m$ from the roots and then back up. Equations \eqref{e:uniformhitting2} and \eqref{e:uniformhitting3} follow because with positive probability the walker hits $W$ and then needs to back up $9m$ levels. Equation \eqref{e:atleastmu} is the easiest of the four, given \eqref{e:uniformhitting}. 

Lastly, the claim that from every $v\in W$ the hitting distribution
of $B$ is uniform follows from the symmetries of the graph. Indeed,
let $\varepsilon_{1},\dotsc,\varepsilon_{5m}\in\{0,1\}$ and let $\varphi_{k}$
be the map of adding the $\varepsilon_{i}$ to the binary digits,
namely
\[
\varphi_{k}\Big(\sum_{i=0}^{\ell-1}b_{i}2^{i}\Big)=\sum_{i=0}^{\ell-1}(b_{i}+\varepsilon_{i}\mod2)2^{i}.
\]
Then it is easy to check that the map $\psi$ that takes $u_{i_{1},\dotsc,i_{\ell}}^{k}$
to $u_{i_{1},\dotsc,i_{\ell}}^{\varphi_{5m-\ell}(k)}$ is an automorphism
of $\bigcup\mathcal{A}_{\ell}$ (as a graph). If, in addition, $\varepsilon_{1}=\dotsb=\varepsilon_{4m}=0$
then this map, restricted to $\mathcal{A}_{0}$, has the property
that if $i$ and $j$ are leaves of the same binary tree, then so
are $\psi(i)$ and $\psi(j)$, and then $\psi$ may be extended to
an automorphism of the graph $H$. By appropriately choosing $\varepsilon_{4m+1},\dotsc,\varepsilon_{5m}$
one may get an automorphism $\psi$ that takes $b$ to $b'$ for any
two points of $B$. This shows the uniformity claim.

%


\end{document}